\tikzstyle{vertex}=[circle,fill=black,inner sep=2pt]
\tikzstyle{vertrect}=[draw,rectangle,inner sep=2pt]
\tikzstyle{vertdia}=[draw,diamond,inner sep=2pt]
\theoremstyle{definition}
\theoremstyle{theorem}
\newtheorem{thm}{Theorem}[section]
\newtheorem{lemma}[thm]{Lemma}
\newtheorem{obs}[thm]{Observation}
\theoremstyle{remark}
\newcommand{\conv}{{\rm{conv}}}
\def\conv{\mbox{\rm conv}}
\title{Big line or big convex polygon}
\author{David Conlon\thanks{Department of Mathematics, California Institute of Technology, Pasadena, CA 91125. Email: dconlon@caltech.edu. Research supported by NSF Awards DMS-2054452 and DMS-2348859.} \and
Jacob Fox\thanks{Department of Mathematics, Stanford University, Stanford, CA 94305. Email: jacobfox@stanford.edu. Research supported by NSF Award DMS-2154129.} \and
Xiaoyu He\thanks{Department of Mathematics, Princeton University, Princeton, NJ 08544. Email: xiaoyuh@princeton.edu. Research supported by NSF Award DMS-2103154.} \and
Dhruv Mubayi\thanks{Department of Mathematics, Statistics and Computer Science, University of Illinois, Chicago, IL 60607. Email: mubayi@uic.edu. Research partially supported by NSF Awards
DMS-1952767 and DMS-2153576.} \and
Andrew Suk\thanks{Department of Mathematics, University of California at San Diego, La Jolla, CA 92093. Email: asuk@ucsd.edu. Research supported by an NSF
CAREER Award and by NSF Awards DMS-1952786 and DMS-2246847.} \and 
Jacques Verstra\"ete\thanks{Department of Mathematics, University of California at San Diego, La Jolla, CA 92093. Email: jacques@ucsd.edu. Research supported by NSF Award DMS-1800332.}}
\date{}
\begin{document}

\maketitle

\begin{abstract}
   Let $ES_{\ell}(n)$ be the minimum $N$ such that every $N$-element point set in the plane contains either $\ell$ collinear members or $n$ points in convex position.  We prove that there is a constant $C>0$ such that, for each $\ell, n \ge 3$, 
   $$ (3\ell - 1) \cdot 2^{n-5}  < ES_{\ell}(n) < \ell^2 \cdot 2^{n+ C\sqrt{n\log n}}.$$ A similar extension of the well-known Erd\H os--Szekeres cups-caps theorem is also proved.
\end{abstract}

\section{Introduction}

Given an $n$-element point set $P$ in the plane, we say that $P$ is in \emph{convex position} if $P$ is the vertex set of a convex $n$-gon.  We say that  $P$ is in \emph{general position} if no three members of $P$ are collinear.  In  1935, addressing a problem raised by Klein, Erd\H os and Szekeres \cite{ES35} proved that, for every integer $n\geq 3$, there is a minimal integer $ES(n)$ such that any set of $ES(n)$ points in the plane in general position contains $n$ members in convex position.  Moreover, they showed that $ES(n)\le {2n-4 \choose n-2} + 1=4^{n+o(n)}$. Many years later~\cite{es2}, they proved that $ES(n) \geq 2^{n-2} + 1$, a bound that they had already conjectured to be tight in their earlier paper. It remained an open problem for several decades to improve the bound $ES(n) \le 4^{n+o(n)}$ by any significant factor. 
This was finally accomplished by Suk~\cite{S}, who proved that $ES(n) = 2^{n+o(n)}$, coming close to matching Erd\H{o}s and Szekeres' lower bound and proving their conjecture.  The best explicit bound for the $o(n)$ term to date is due to Holmsen et al.~\cite{holm}, who optimized the argument in \cite{S} and showed that $ES(n)\le 2^{n + O(\sqrt{n\log n})}$.

In this paper, we extend these results to arbitrary point sets in the plane.  Let $ES_{\ell}(n)$ be the minimum $N$ such that every $N$-point set in the plane contains either $\ell$ collinear points or $n$ points in convex position. Hence, $ES_3(n) = ES(n)$. For $\ell \geq 3$, we prove the following.

\begin{thm} \label{convex}
    There exists $C>0$ such that, for each $\ell, n \ge 3$,  $ES_{\ell}(n)\le 
  \ell^2 \cdot 2^{n+C\sqrt{n\log n}}$. 
\end{thm}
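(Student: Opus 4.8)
The plan is to reduce the general-position-with-collinearity problem to the classical Erdős–Szekeres theorem by a perturbation/grouping argument. Given a set $P$ of $N = \ell^2 \cdot 2^{n+C\sqrt{n\log n}}$ points in the plane with no $\ell$ collinear, I first want to extract a large subset $P'$ that is in *general position*. The crude bound: any line contains at most $\ell - 1$ points of $P$, so a greedy/probabilistic argument—pick points one at a time, or sample each point independently with an appropriate probability—destroys all collinear triples while keeping a positive fraction. But a fraction that's only polynomial in $\ell$ won't be good enough in general; the key realization is that we should not try to keep a *constant* fraction. Instead, think of it this way: the number of collinear triples is controlled because each line is "light," and one can show (e.g. by a theorem of Füredi–Palásti or a direct counting argument on the number of directions) that a set with no $\ell$ collinear points contains a general-position subset of size $\Omega(\sqrt{N/\ell})$ or so. Applying $ES(n) \le 2^{n+O(\sqrt{n\log n})}$ (Holmsen et al.~\cite{holm}) to that subset, we'd need $\sqrt{N/\ell} \gtrsim 2^{n+O(\sqrt{n\log n})}$, i.e. $N \gtrsim \ell \cdot 4^{n+O(\sqrt{n\log n})}$, which is far too weak—it gives base $4$, not base $2$.

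So the naive "pass to general position, then apply Erdős–Szekeres" route loses a square. The right approach, I expect, is to adapt Suk's argument (as optimized in \cite{holm}) directly, carrying the collinearity hypothesis through it, rather than reducing to the general-position case as a black box. Suk's proof builds convex polygons out of "cups" and "caps" via a Ramsey-type/partition argument on the point set sorted by $x$-coordinate; the only place general position is used is to guarantee that any three points form either a cup or a cap (never a degenerate collinear triple) and to run the cup–cap recursion. With the $\ell$-collinear hypothesis, one instead works with a weak coloring: sort $P$ by $x$-coordinate (after a rotation ensuring distinct $x$-coordinates, which is fine since there are finitely many bad directions), and observe that among any $\ell$ points with a common $x$-coordinate... actually, a cleaner device: partition $P$ into at most $\ell-1$ subsets, each in general position, is *not* generally possible, but partition into few subsets each with *bounded* collinearity is, and within the Suk/Holmsen machinery a bounded number of collinear points only costs a bounded multiplicative factor at each level of the recursion. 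Tracking these factors, each line of multiplicity $< \ell$ contributes at most an $O(\ell)$ or $O(\ell^2)$ overhead to the final count—hence the $\ell^2$ prefactor in the statement.

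Concretely, the steps I would carry out: (1) Rotate so all $x$-coordinates are distinct and order $P = \{p_1, \dots, p_N\}$ left to right. (2) Define cups and caps as usual; a collinear triple is both, but there are few of them since no line has $\ell$ points. (3) Set up the recursion of \cite{S,holm} for finding a convex $n$-gon, but at the base/merging steps replace "general position" with "at most $\ell-1$ on a line," which inflates the relevant Ramsey/extremal quantities by a factor polynomial in $\ell$ at a single controlled stage (not at every one of the $n$ levels—that's the crucial point, and the source of only an $\ell^2$ loss rather than $\ell^n$). (4) Carefully re-examine the one step where collinear configurations could masquerade as convex position and argue that the $\ell$-bound prevents a long collinear run from filling a would-be convex polygon. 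Collecting constants gives $ES_\ell(n) \le \ell^2 \cdot 2^{n + C\sqrt{n \log n}}$.

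The main obstacle, and where I'd expect to spend the most care, is step (3): isolating *where* in Suk's recursion the general-position hypothesis is actually invoked and showing the $\ell$-dependence enters only multiplicatively and only once (or a bounded number of times), so that it does not compound through the $\sim n$ levels of recursion. If instead the loss were per-level, one would get $\ell^{O(n)}$, which is useless; proving it is a single $\operatorname{poly}(\ell)$ loss—matching the lower bound's linear-in-$\ell$ behavior up to the exponent $2$—is the heart of the theorem.
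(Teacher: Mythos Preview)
Your proposal is not yet a proof: it correctly diagnoses that passing to a general-position subset and black-boxing Suk's theorem loses a square in the exponent, and it correctly concludes that one must run the Suk/Holmsen machinery ``natively'' on a set with no $\ell$ collinear points. But the substance of the argument---your step (3)---is precisely what you leave open. You say the $\ell$-dependence should enter ``only multiplicatively and only once'' without identifying where or why, and you yourself flag this as the unresolved heart of the matter.

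The paper's route is more specific than your sketch suggests, and it requires two genuinely new ingredients that do not appear in your outline. First, the Suk/Holmsen argument does not proceed by an ``$n$-level recursion'' as you describe; its engine is a \emph{positive fraction} cups--caps theorem (find a $k$-cap with $k \approx \sqrt{n\log n}$ whose support triangles each swallow a $2^{-O(k)}$ fraction of the points), followed by cup--cap bounds inside each triangle and a product/telescoping estimate. For point sets in general position the positive fraction step is a routine supersaturation argument, but---and the paper says this explicitly---supersaturation gives a poor dependence on $\ell$ here. The paper instead proves a new positive fraction theorem for sets with no $\ell$ collinear points via simplicial partitions (Chan's disjoint-cell version) and a random sparsification to kill ``bad'' triples of cells. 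This is not a minor patch to the general-position proof; it is a different mechanism.

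Second, inside each support region one needs a cups--caps bound for point sets that are not in general position, and more generally for ``inner-caps'' and ``outer-cups'' relative to a fixed convex set $K$. The paper proves such a theorem, $f_\ell(m,n)\le c(\min\{m,n\}+\ell)\binom{m+n-4}{n-2}$, using the Moshkovitz--Shapira down-set encoding together with Beck's theorem (many points with no $\ell$ collinear determine many distinct lines). This is where the factor of $(\ell+n)$ enters, and it enters \emph{twice}---once when bounding chains and once when bounding antichains in the partial order on each region---yielding $(\ell+n)^2$ per region. After taking the $k$-th root of the product over regions, this becomes a global $(\ell+n)^2$ factor; the $n^2$ part is absorbed into $2^{O(\sqrt{n\log n})}$, leaving $\ell^2$.

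In short, your high-level instinct (adapt Suk directly) is right, but the execution demands the two new theorems above, neither of which is a small perturbation of the general-position arguments. Without them your step (3) cannot be completed as written.
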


  The proof of Theorem \ref{convex} is based on both a new cups-caps theorem for arbitrary point sets in the plane and a new positive fraction Erd\H os--Szekeres theorem. In the case where $n$ is fixed and $\ell$ tends to infinity, our cups-caps theorem implies that $ES_{\ell}(n) = O(\ell)$, which is best possible up to constants. 
  In turn, our lower bound for the cups-caps theorem implies the following lower bound for $ES_{\ell}(n)$, which agrees with the Erd\H os--Szekeres lower bound in the $\ell = 3$ case.

\begin{thm}\label{thmlower}
  For each $\ell, n \ge 3$, $ES_{\ell}(n)\geq 
   (3\ell - 1) \cdot 2^{n-5}  + 1$. 
\end{thm}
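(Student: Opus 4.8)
The plan is to construct, for every $n\ge 5$, a planar point set with exactly $(3\ell-1)\,2^{n-5}$ points, no $\ell$ of them collinear, and no $n$ of them in convex position; this gives $ES_\ell(n)\ge (3\ell-1)2^{n-5}+1$ for $n\ge 5$, and for $n\in\{3,4\}$ the claimed inequality follows from a trivial construction: an $(\ell-1)$-element line, which has no three points in convex position, witnesses $ES_\ell(3)=\ell\ge (3\ell-1)/4+1$, and two parallel lines carrying $\ell-1$ points each with disjoint $x$-coordinate ranges --- which has no four points in convex position and no $\ell$ collinear --- witnesses $ES_\ell(4)\ge 2\ell-1\ge (3\ell-1)/2+1$, both valid for $\ell\ge 3$. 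The construction for $n\ge 5$ is a collinearity-aware version of the classical Erd\H os--Szekeres construction proving $ES(n)=ES_3(n)\ge 2^{n-2}+1$: that construction assembles $2^{n-2}$ points with no convex $n$-gon out of extremal cup-cap-free configurations, glued together by \emph{cup-cap merges}, where a merge places one configuration far to the lower left of another, each on the appropriate side of every line spanned by two points of the other, so that in the union the vertices of any convex polygon that lie in the first configuration form a cup and those that lie in the second form a cap.

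The first thing to check is that this skeleton is undisturbed by collinearities. On the one hand, a cup-cap merge performed with a generic scaling, rotation and translation creates no new $\ell$-fold collinearity: no line can then meet both configurations in two or more points, so a line through points of both carries at most $2\le\ell-1$ of them, and by induction each configuration has no $\ell$ collinear points on its own. On the other hand, the Erd\H os--Szekeres bound on the size of a convex polygon inside a merged configuration uses only cups, caps and the relative placement of the pieces, and any convex polygon is automatically in strictly convex position, so collinearities among the points are irrelevant to it. Thus the whole problem reduces to the lower-bound half of the cups-caps theorem for arbitrary point sets: for all $a,b$, one needs a point set with no $\ell$ collinear points, no $a$-cup and no $b$-cap, as large as possible, and then these are to be assembled by the same kind of recursion as in the $\ell=3$ case, the sizes now totalling $(3\ell-1)2^{n-5}$ (which is $2^{n-2}$ when $\ell=3$, since then $3\ell-1=8$).

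The heart of the matter is this collinear cups-caps lower bound, especially its base configurations. I would build the extremal configurations recursively --- $\Sigma^{\ell}_{a,b}$ as a cup-cap merge of $\Sigma^{\ell}_{a-1,b}$ and $\Sigma^{\ell}_{a,b-1}$ --- with base configurations assembled from collinear blocks of $\ell-1$ points, so that no $\ell$ points are collinear; this is the point at which the extra factor over $2^{n-2}$ is gained. The delicate part, and what I expect to be the main obstacle, is the slope bookkeeping for partially collinear cups and caps: since each block is collinear, a cup or a cap can use at most two of its points, so a block of $\ell-1$ collinear points acts like a ``short'' object for cup/cap purposes while still contributing $\ell-1$ to the point count --- but the slopes of the blocks, and the transition slopes between consecutive blocks (which are themselves new slopes), must be chosen carefully, since otherwise a chain of such blocks combines into a far longer cup or cap than intended (already, chaining three $(\ell-1)$-blocks into a cap-shaped chain puts six of the points in convex position). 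Establishing the exact extremal sizes in this collinear setting --- equivalently, verifying that the recursion together with the final assembly produces exactly $(3\ell-1)2^{n-5}$ points with no $\ell$ collinear and no convex $n$-gon --- is the core of the argument; with it in hand, together with the elementary cases $n=3,4$, we obtain $ES_\ell(n)\ge (3\ell-1)2^{n-5}+1$ for all $\ell,n\ge 3$.
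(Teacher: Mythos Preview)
Your high-level plan is exactly the paper's: recursively build sets $X_{\ell,m,n}$ with no $\ell$ collinear points, no $m$-cup and no $n$-cap via cup--cap merges, seed the recursion with $(\ell-1)$-blocks, and then lay the $X_{\ell,m,n}$ along an arc as in the classical Erd\H os--Szekeres lower bound. The paper executes this and obtains $|X_{\ell,m,n}|\ge h_\ell(m,n):=\tfrac{\ell-1}{2}\binom{m+n-4}{n-2}-\tfrac{\ell-3}{2}\binom{m+n-6}{n-3}$; summing the $h_\ell$ along the arc then telescopes to $(3\ell-1)2^{n-5}$.

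There are, however, a concrete error and a genuine gap. The error is your $n=4$ warm-up: two $(\ell-1)$-blocks on parallel lines with disjoint $x$-ranges always contain four points in convex position --- any two from each line form a trapezoid --- so this does not witness $ES_\ell(4)\ge 2\ell-1$. The gap is the base case itself, which you rightly flag as ``the core of the argument'' but do not supply. It is more than slope bookkeeping: as you already note, a chain of $k$ collinear $(\ell-1)$-blocks in cup position contains a $2k$-cup (two points per block), so one cannot simply take $m-2$ blocks for the ``no $m$-cup, no $3$-cap'' seed. The paper's device is to take the lower boundary of a regular $m$-gon and place $\ell-1$ interior points on only $\lfloor (m-1)/2\rfloor$ of its edges (plus one extra point when $m$ is even); the longest cup then has at most $2\lfloor(m-1)/2\rfloor\le m-1$ points. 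This halving is exactly what produces the leading coefficient $\tfrac{\ell-1}{2}$ in $h_\ell(m,n)$ and, after the arc assembly and the binomial summation, the constant $3\ell-1$ in the theorem. Without this construction (or an equivalent one) and the resulting closed form for $h_\ell$, your recursion has no anchored size and the target $(3\ell-1)2^{n-5}$ is not established.
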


It remains an interesting open problem to determine the correct dependence of $ES_{\ell}(n)$ on $\ell$. 

The paper is organized as follows.  In the next section, we prove our cups-caps theorem for arbitrary point sets and Theorem \ref{thmlower}.  In Section \ref{secpos}, we establish a positive fraction Erd\H os--Szekeres theorem for arbitrary point sets.  Finally, in Section \ref{secpf}, we prove Theorem \ref{convex}.  For the sake of clarity, we omit floor and ceiling signs whenever they are not crucial. We assume throughout that our point sets have distinct $x$-coordinates, since we can slightly rotate the plane otherwise.

\section{A cups-caps theorem for arbitrary point sets}\label{secaux}

  Let $X$ be a $k$-element point set in the plane with distinct $x$-coordinates. We say that $X$ forms a \emph{$k$-cup} (\emph{$k$-cap}) if $X$ is in convex position and its convex hull is bounded above (below) by a single edge.  The \emph{length} of a $k$-cup ($k$-cap) is $k-1$. 
 Write $f_{\ell}(m,n)$ for the  minimum $N$ such that every $N$-point set in the plane contains either $\ell$ collinear members, an $m$-cup or an $n$-cap. Erd\H os and Szekeres~\cite{ES35} proved that 
  
  \begin{equation}\label{escupcap}f_{3}(m,n) ={m+n-4 \choose n-2} + 1.\end{equation} For $\ell \geq 3$, we prove the following.

\begin{thm} \label{cupscaps}
There is an absolute constant $c > 1$ such that, for $m, n, \ell \ge 3$, 
$$f_{\ell}(m,n)\le  c(\min\{m-1,n-1\}  + \ell)\cdot {m+n-4 \choose n-2}.$$  
\end{thm}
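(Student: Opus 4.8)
The plan is to run the classical Erd\H os--Szekeres cup--cap recurrence, reducing $(m,n)$ to $(m-1,n)$ and $(m,n-1)$ with $\ell$ held fixed, and to pay for the collinear points through a lower-order error term. Since $\ell$ collinear points is an admissible conclusion, we may assume $X$ has at most $\ell-1$ points on every line. We induct on $m+n$; reflecting the plane (which swaps cups and caps), we may assume $m\le n$, so the target is $c(m-1+\ell)\binom{m+n-4}{n-2}$. The base case is $m=3$: a set with no $3$-cup and no $\ell$ collinear points is, read in $x$-order, a \emph{weak cap} (consecutive slopes non-increasing), hence has at most $\ell-2$ consecutive equal slopes, and so contains an honest cap on $\lceil(|X|-1)/(\ell-2)\rceil+1$ of its points; this pins down $f_\ell(3,n)$ up to the constant $c$.

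For the inductive step, let $|X|=f_\ell(m-1,n)+f_\ell(m,n-1)-1+R$ with $R$ to be chosen, and suppose $X$ has no $m$-cup, no $n$-cap, and no $\ell$ collinear points. Let $E$ be the set of right endpoints of $(m-1)$-cups in $X$. Then $X\setminus E$ contains no $(m-1)$-cup (its right endpoint would lie in $E$), so $|X\setminus E|\le f_\ell(m-1,n)-1$. The classical bound on $|E|$ takes an $(n-1)$-cap $b_1<\dots<b_{n-1}$ inside $E$ and an $(m-1)$-cup ending at $b_1$ with last slope $s'$: one obtains an $m$-cup if $s'<\operatorname{slope}(b_1b_2)$ and an $n$-cap if $s'>\operatorname{slope}(b_1b_2)$. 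The new phenomenon in the non-generic setting is the \emph{tangent} case $s'=\operatorname{slope}(b_1b_2)$, where $b_1$, $b_2$ and the penultimate cup vertex are collinear and no contradiction results.

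To cope with this, call $p\in E$ \emph{flexible} if two $(m-1)$-cups end at $p$ with distinct last slopes, and \emph{rigid} otherwise; let $R^*$ be the set of rigid points that arise as the leftmost vertex of some $(n-1)$-cap lying in $E$. If an $(n-1)$-cap in $E$ has a flexible left endpoint, choosing an $(m-1)$-cup there whose last slope differs from the first cap slope yields an $m$-cup or an $n$-cap, a contradiction; hence every $(n-1)$-cap in $E$ has a rigid left endpoint, which — to dodge the same contradiction — must lie in $R^*$ and be tangent there to its cup along the line $L_p$ through $p$ whose slope equals that of the first cap edge at $p$. Consequently $E\setminus R^*$ contains no $(n-1)$-cap, so $|E\setminus R^*|\le f_\ell(m,n-1)-1$, and altogether
$$|X|\ \le\ f_\ell(m-1,n)+f_\ell(m,n-1)-2+|R^*|.$$
A one-line computation with Pascal's identity then shows that the induction closes provided we set $R=|R^*|$ and have
$$|R^*|\ \le\ c\binom{m+n-5}{n-2}+1\ =\ c\bigl(f_3(m-1,n)-1\bigr)+1,$$
i.e.\ provided the number of tangent points is at most a constant times the general-position cup--cap bound of the reduced problem $(m-1,n)$; the single boundary case $m=n$ needs only an easier version of this estimate.

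The main obstacle is exactly this bound on $|R^*|$. Each $p\in R^*$ is attached to a line $L_p$ carrying at least three points of $X$ (the penultimate vertex of its cup, $p$ itself, and the second vertex of its cap), so $L_p$ carries at most $\ell-1$ points of $X$, hence of $R^*$; moreover along $L_p$ an $(m-2)$-cup sits to the left of $p$ while an $(n-2)$-cap sits to the right, which severely restricts the possible tangent lines $L_p$. The plan is to leverage these two facts — the per-line cap on $|R^*|$ and the rigidity of the tangent lines — together with the weak-chain counting used in the base case, to deduce $|R^*|=O\bigl(\binom{m+n-5}{n-2}\bigr)$. Carrying out this estimate cleanly (and checking the Pascal bookkeeping, which is trivial when $m=n$) is where the real work lies; granting it, one sets $R=|R^*|$ and chooses $c$ large enough to absorb all the implied constants, which completes the induction and proves Theorem~\ref{cupscaps}.
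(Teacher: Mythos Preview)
Your approach is quite different from the paper's: rather than running the classical Erd\H os--Szekeres recurrence and tracking an error term through the induction, the paper uses the Moshkovitz--Shapira down-set encoding. It labels each ordered pair $pq$ with the lengths of the longest cup and cap ending at $pq$, pigeonholes on the down-sets $D(q)$ generated by these labels (there are exactly $\binom{m+n-4}{n-2}$ of them), and then applies Beck's theorem to the resulting large set of points with a common down-set to find either $\ell$ collinear points or enough distinct directions to force two comparable labels on pairs $pq$, $pq'$ with $q,q'$ non-collinear with $p$. The paper never confronts the ``tangent'' obstruction you isolate; Beck's theorem sidesteps it entirely by producing non-collinear triples directly.

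The substantive gap in your proposal is the bound on $|R^*|$. You correctly identify that the induction closes if $|R^*|\le c\binom{m+n-5}{n-2}+1$, a bound that is \emph{independent of $\ell$}. But the structural information you have on $R^*$ --- each $p\in R^*$ lies on a line $L_p$ carrying at most $\ell-1$ points of $X$, with an $(m-1)$-cup ending and an $(n-1)$-cap starting along $L_p$ --- gives no obvious handle on $|R^*|$ that does not scale with $\ell$. A priori the lines $L_p$ could be distinct for each $p$, and $R^*$ itself inherits only the hypotheses ``no $m$-cup, no $n$-cap, no $\ell$ collinear,'' which would circularly bound $|R^*|$ by $f_\ell(m,n)$. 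You explicitly write ``granting it'' and ``where the real work lies,'' so the proposal is a plan rather than a proof; and it is not clear the plan can be completed as stated, since the needed estimate is essentially the theorem for the pair $(m-1,n)$ restricted to a structured subset, with no mechanism shown for stripping out the $\ell$-dependence.
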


The proof of this theorem is based on a connection between down-sets and (\ref{escupcap}) discovered by Moshkovitz and Shapira \cite{MS}.   We will also need the following lemma due to Beck~\cite{beck}.

\begin{lemma}[Theorem 1.2 in \cite{beck}]\label{beck}
There is an absolute constant $\varepsilon> 0 $ such that every $t$-element point set in the plane contains either $\varepsilon t$ collinear points or determines at least $\varepsilon\binom{t}{2}$ distinct lines.  
    
\end{lemma}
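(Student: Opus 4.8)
The plan is to derive this from the Szemer\'edi--Trotter incidence theorem via a dyadic decomposition of the lines according to how many of the given points they carry; this is the standard route to Beck-type statements.

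First, recall the Szemer\'edi--Trotter theorem: the number of incidences between a set of $p$ points and a set of $q$ lines in the plane is $O(p^{2/3}q^{2/3}+p+q)$. I would use its familiar corollary: there is an absolute constant $k_0$ such that, for every $k\ge k_0$, the number of lines containing at least $k$ of the given $t$ points is $O(t^2/k^3+t/k)$. This follows by writing $k\cdot(\#\text{lines with}\ge k\text{ points})\le(\#\text{incidences})$ and solving, the $q$-terms being absorbed once $k$ is a large enough constant.

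Now set up the dichotomy. Let $\varepsilon>0$ be a small absolute constant, to be fixed at the end, and assume no line contains $\varepsilon t$ of the points --- otherwise the first alternative of the lemma holds and we are done. Under this assumption I would bound the number of \emph{pairs} $\{x,y\}$ of the $t$ points whose spanning line contains at least $k_0$ points. Grouping lines dyadically by the number of points they carry, a line carrying between $2^j$ and $2^{j+1}$ points contributes at most $\binom{2^{j+1}}{2}$ such pairs, and by the corollary above there are $O(t^2/2^{3j}+t/2^j)$ of them; so such lines contribute $O(t^2/2^j+2^j t)$ pairs in total. Summing over $j$ from $\log_2 k_0$ up to $\log_2(\varepsilon t)$ --- the upper cutoff is exactly where the no-$\varepsilon t$-collinear hypothesis is used --- both geometric series are dominated by their extreme terms, yielding a bound of the shape $O(t^2/k_0)+O(\varepsilon t^2)$ for the number of pairs lying on lines with $\ge k_0$ points. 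Choosing $k_0$ a large absolute constant and then $\varepsilon$ a small absolute constant makes this at most $\tfrac12\binom t2$.

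Finally, conclude: at least $\tfrac12\binom t2$ pairs lie on lines carrying between $2$ and $k_0-1$ points. Each such line accounts for at most $\binom{k_0-1}{2}$ pairs, so the number of these lines --- all of which are distinct --- is at least $\binom t2\big/\bigl(2\binom{k_0-1}{2}\bigr)$, which is $\Omega(t^2)$ since $k_0$ is an absolute constant; shrinking $\varepsilon$ below this implied constant if needed finishes the proof. The only genuinely delicate point I expect is the constant bookkeeping in the third paragraph --- verifying that the Szemer\'edi--Trotter constant, which is fixed but not small, really is beaten once $k_0$ is taken large and $\varepsilon$ small, so that the ``heavy pairs'' total comes out below $\tfrac12\binom t2$; the rest is routine.
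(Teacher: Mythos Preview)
The paper does not supply a proof of this lemma; it is quoted as a known result of Beck and invoked as a black box in the proof of Theorem~\ref{cupscaps}. Your argument via the Szemer\'edi--Trotter theorem and a dyadic decomposition over line multiplicities is correct and is the standard modern route to Beck's theorem; the constant bookkeeping you flag is genuine but routine, exactly as you say.
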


\begin{proof}[Proof of Theorem \ref{cupscaps}]
  Let $\varepsilon > 0$ be the absolute constant from Lemma \ref{beck} and set $c = 10/\varepsilon$.  Let $P$ be an $N$-element point set in the plane where $N  =   c\cdot (\min\{m-1,n-1\}  + \ell)\cdot {m+n-4 \choose n-2} + 1 . $  We may assume that $P$ does not contain $\ell$ collinear members, since otherwise we would be done.   Given points $p,q \in P$, we write $p < q$ if the $x$-coordinate of $p$ is less than the $x$-coordinate of $q$.   For the sake of contradiction, suppose $P$ contains neither an $m$-cup nor an $n$-cap.  Hence, the longest cup in $P$ has length at most $m-2$ and the longest cap in $P$ has length at most $n -2$. 

Let $p,q \in P$ be such that $p < q$.  We label the pair $pq$ with the ordered pair $(x_{pq},y_{pq})$, where $x_{pq}$ is the length of the longest cup ending at $pq$ and $y_{pq}$ is the length of the longest cap ending at $pq$. Let $L(m-2, n-2)$ be the poset on $[m-2] \times [n-2]$ where $(x,y) \preceq (x',y')$ iff $x\le x'$ and $y\le y'$.  For each $q \in P$, let $S(q)=\{(x_{pq}, y_{pq}): p \in P, \, p < q\}$.  Let $D(q)=\{(x,y) \in L(m-2,n-2): \exists (x_{pq}, y_{pq}) \in S(q), \, (x,y)\preceq (x_{pq},y_{pq})\}$ be the \emph{down-set} in $L(m-2,n-2)$ generated by $S(q)$.  

 The number of down-sets in $L(m-2, n-2)$ is ${m+n-4 \choose n-2}$ (see, e.g., \cite[Observation~2.1]{MS}). Hence, by the pigeonhole principle, there are points $q_1<q_2< \cdots < q_t$ in $P$ with $t\geq c\cdot (\min\{m-1,n-1\} + \ell)$ such that $D(q_i)=D(q_j)$ for all $i<j$.  Set $Q  = \{q_1,\ldots, q_t\}$.  By Lemma~\ref{beck}, $Q$ contains either $\varepsilon t$ collinear members or determines at least $\varepsilon\binom{t}{2}$ distinct lines.  In the former case, we have $\varepsilon t > \ell$ collinear points, which is a contradiction.  Hence, $Q$ determines at least $\varepsilon\binom{t}{2}$ distinct lines.  By averaging, there is a point $p \in Q$ and a subset $Q'\subset Q$ of size at least $\varepsilon t/2 > \min\{m-1,n-1\}$ such that $p < q$ for each $q \in Q'$ and there are $|Q'|$ distinct lines between $p$ and $Q'$.  Consider the labels on $pq$ for each $q \in Q'$.  Since the maximum size of an antichain in $L(m-2,n-2)$ is $\min\{m-1,n-1\}$, by the pigeonhole principle, we obtain points $p, q, q'$ such that $p<q<q'$ and

\begin{enumerate}
    \item $D(p)=D(q)=D(q')$ and 

    \item $(x_{pq}, y_{pq}) \preceq (x_{pq'}, y_{pq'})$ or $(x_{pq}, y_{pq}) \succeq (x_{pq'}, y_{pq'}).$
\end{enumerate}

Let us assume by symmetry that $(x_{pq}, y_{pq}) \succeq (x_{pq'}, y_{pq'})$. Since $D(p)=D(q)$, there exists $(x,y) \in S(p) \subset D(p)$ such that $(x,y) \succeq (x_{pq}, y_{pq})$ and, by transitivity, $(x,y) \succeq (x_{pq'}, y_{pq'})$. By the definition of $S(p)$, there exists $p'<p$ such that $x=x_{p'p}$ and $y=y_{p'p}$. 
Since $p, q, q'$ are not collinear, one of $p'pq, p'pq'$ is not collinear. Without loss of generality, we can assume that $p'pq$ is not collinear, since the other case is symmetric.  Then the triple $p'pq$ is either a cup or a cap.  In the former case, the longest cup ending at $p'p$ with length $x_{p'p}$ can be extended to end at $pq$, which is a contradiction.  If instead $p'pq$ is a cap, then the longest cap ending at $p'p$ with length $y_{p'p}$ can be extended to end at $pq$, again a contradiction.    
\end{proof}

In the other direction, we prove the following.

\begin{thm} \label{thm:capcuplower}
    $$f_{\ell}(m,n) > \frac{\ell - 1}{2}\binom{m + n - 4}{n - 2} - \frac{\ell - 3}{2}\binom{m + n - 6}{n-3}.$$
\end{thm}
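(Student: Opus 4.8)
The plan is to construct an explicit point set avoiding $\ell$ collinear points, $m$-cups, and $n$-caps, thereby lower-bounding $f_\ell(m,n)$. The natural starting point is the classical Erdős–Szekeres construction witnessing $f_3(m,n) > \binom{m+n-4}{n-2}$: a point set $X_{m,n}$ in general position with no $(m)$-cup and no $(n)$-cap of size exactly $\binom{m+n-4}{n-2}$. The idea is to take such a configuration and "thicken" each point into a short collinear cluster of $\ell-1$ points placed on a tiny segment, so that any line through the cluster meets only $\ell-1$ of them (avoiding $\ell$ collinear points), while cups and caps that use at most one point from each cluster behave as in the original construction. The subtlety, and the reason the bound is not simply $(\ell-1)\binom{m+n-4}{n-2}$, is that a cup or cap could use two points from the same cluster — since a tiny cluster on a line of generic slope can serve as the "bottom edge" of a cup or "top edge" of a cap — so one must orient the clusters carefully and account for this loss, which is exactly what the subtracted term $\frac{\ell-3}{2}\binom{m+n-6}{n-3}$ reflects.

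The key steps, in order, are as follows. First, I would recall the recursive/explicit structure of the extremal Erdős–Szekeres cups-caps configuration and its well-known decomposition: $X_{m,n}$ splits into a "left block" that is an extremal $(m-1,n)$-configuration and a "right block" that is an extremal $(m,n-1)$-configuration, with all left points below-and-left of all right points in the appropriate sense, giving the Pascal-type recursion $g(m,n) = g(m-1,n) + g(m,n-1)$ with $g(m,n) = \binom{m+n-4}{n-2}$. Second, I would define the thickened configuration $Y_{m,n}$: replace each point $p$ of $X_{m,n}$ by $\ell-1$ points on a very short segment through $p$; the slope and exact placement of this segment will be chosen depending on the local geometry (roughly, to be "horizontal-ish" so the cluster can extend a cap by one extra point but not participate as an internal vertex of either a cup or a cap). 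Third, I would carefully bound the longest cup and longest cap in $Y_{m,n}$: a cup in $Y_{m,n}$ uses at most two points from any one cluster, and using two forces that cluster to be an extreme (first or last) vertex, so the total cup length is at most (longest cup in $X_{m,n}$) plus a small bounded additive term, and similarly for caps; choosing the segment slopes correctly makes the "extra" contribution land on only caps (or only cups), which is where the asymmetry leading to the $\binom{m+n-6}{n-3}$ correction comes from. Fourth, I would set up the precise recursion for $h_\ell(m,n) := |Y_{m,n}|$, the size of the best such construction: one gets something like $h_\ell(m,n) = h_\ell(m-1,n) + h_\ell(m,n-1)$ with a boundary contribution of order $\ell$ along one axis, and then verify by induction that the closed form $\frac{\ell-1}{2}\binom{m+n-4}{n-2} - \frac{\ell-3}{2}\binom{m+n-6}{n-3}$ satisfies this recursion with the right base cases (e.g. $m=3$ or $n=3$, where the configuration is essentially a single long near-collinear chain of $\approx \frac{\ell-1}{2}$-fold thickened points).

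The main obstacle I anticipate is controlling, simultaneously and tightly, the interaction between two points of the same cluster and the global cup/cap structure — i.e., proving that the thickening slopes can be chosen so that each cluster contributes at most \emph{one} extra vertex to at most \emph{one} of "cup" or "cap" along any given chain, and that these extra vertices cannot compound across multiple clusters in a single cup or cap. Getting this exactly right is what pins down the constants: a naive thickening gives a weaker bound, and the precise coefficient $\frac{\ell-1}{2}$ versus $\ell-1$ arises because clusters must alternate in orientation (some "cup-safe", some "cap-safe") to avoid both an $m$-cup and an $n$-cap, roughly halving the usable multiplicity. I would also need to double-check the base cases and the small-$\ell$ behavior ($\ell = 3$ should recover the classical bound $\binom{m+n-4}{n-2}$, which it does since then the second term vanishes), as a consistency check on the recursion and closed form.
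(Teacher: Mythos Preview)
Your high-level plan—construct sets $Y_{\ell,m,n}$ satisfying the Pascal recursion $|Y_{\ell,m,n}| \ge |Y_{\ell,m-1,n}| + |Y_{\ell,m,n-1}|$, verify the closed form $h_\ell(m,n)$ satisfies the same recursion, and check base cases at $m=3$ or $n=3$—is exactly the paper's strategy, and the recursive combination step (a flat copy of the $(m-1,n)$-set placed far below-left of a flat copy of the $(m,n-1)$-set) is identical.

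The gap is in your base case and your thickening mechanism. You propose to replace \emph{every} point of the classical $X_{m,n}$ by a short collinear cluster and then control cups and caps by choosing cluster slopes. But at the base $X_{m,3}$, which is simply $m-1$ points in a cup, this fails outright: whatever slopes you assign the clusters, some cluster $C_i$ can contribute two points as the first (or last) edge of a cup that then runs through one point of every remaining cluster, yielding an $m$-cup. Your ``alternating orientation'' idea does not prevent this, because the obstruction is not that \emph{many} clusters each give two points—indeed with a common slope at most one can—but that even a single $+1$ on top of an existing $(m-1)$-cup is fatal. Likewise, your description of the base case as ``$\tfrac{\ell-1}{2}$-fold thickened points'' gets the count $\tfrac{(\ell-1)(m-1)}{2}$ right but the geometry wrong: $m-1$ clusters of size $\tfrac{\ell-1}{2}$ still supports an $m$-cup by the same argument.

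The paper's base case is built differently and this is the missing idea: $X_{\ell,m,3}$ is not a thickening of the $(m-1)$-point cup at all. Instead one takes only $\lfloor (m-1)/2 \rfloor$ segments of the lower boundary of a convex polygon and places $\ell-1$ points on each segment. Then there is no $3$-cap, no $\ell$ collinear points, and any cup picks at most two points per segment, hence at most $2\lfloor(m-1)/2\rfloor \le m-1$ points. The factor $\tfrac{\ell-1}{2}$ thus comes from halving the number of \emph{segments}, not from halving cluster multiplicity or alternating orientations. With this base case in hand, the recursion proceeds exactly as you describe, with no further thickening needed.
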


\begin{proof}
Set 
$$h_{\ell}(m,n) := \frac{\ell - 1}{2}\binom{m + n - 4}{n - 2} - \frac{\ell - 3}{2}\binom{m + n - 6}{n-3}.$$
In what follows, we will recursively construct planar point sets $X_{\ell, m,n}$ with $|X_{\ell, m,n}| \geq h_{\ell}(m,n)$ that contain neither $\ell$ collinear points, $m$-cups nor $n$-caps.  
For $\ell, m\geq 3$, we construct $X_{\ell, m,3}$ by taking the lower half of a regular $m$-gon and, on $\lfloor (m-1)/2\rfloor$ of these segments, placing $\ell-1$ collinear points in the interior of the segment. If $m-1$ is odd, then add another point on a segment by itself (adding more than one point to this segment would create an $m$-cup). 
Hence, we have no $\ell$ collinear points, no $m$-cup and no $3$-cap.  Moreover, 
\begin{equation*}
    |X_{\ell, m,3}| = \left\{\begin{array}{ll}
 (\ell - 1)\frac{m-1}{2}   & \textnormal{if $m-1$ is even} \\\\
  (\ell-1)\frac{m-2}{2} + 1  & \textnormal{if $m-1$ is odd.}
\end{array}\right.
\end{equation*}
Hence, for all $m \geq 3$, 
$$    |X_{\ell, m,3}| \geq \frac{\ell  - 1}{2}(m-1) - \frac{\ell -3}{2} = h_{\ell}(m,3),$$
as desired.  We construct $X_{\ell, 3,n}$ similarly such that
$$    |X_{\ell, 3,n}| \geq \frac{\ell  - 1}{2}(n-1) - \frac{\ell -3}{2} = h_{\ell}(3,n).$$
For the recursive step, assume that we have constructed $X_{\ell, m',n'}$ for all $m' < m$ or $n' < n$.  We construct $X_{\ell, m, n}$ as follows. Take a very flat copy of $X_{\ell, m-1, n}$ and a very flat copy of $X_{\ell, m,n-1}$ such that $X_{\ell, m,n-1}$ is very high and far to the right of $X_{\ell, m-1, n}$, the line spanned by any two points in $X_{\ell, m-1, n}$ lies below $X_{\ell, m, n-1}$ and the line spanned by any two points in $X_{\ell, m, n-1}$ lies above $X_{\ell, m-1, n}$.  See Figure \ref{figlower2}.

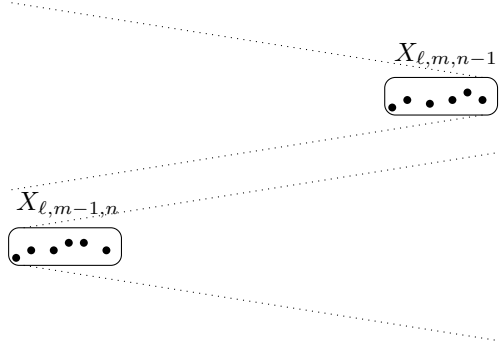
\begin{figure}
\begin{center}
\begin{tikzpicture}
\foreach \x/\y in {.1/0.1, 0.3/0.2, 0.6/0.2, 0.8/0.3, 1/0.3, 1.3/0.2} {
    \fill (\x,\y) circle[radius=1.5pt];
}
\node[below left] at (1.6,1.1) {$X_{\ell, m-1, n}$};
\draw[rounded corners] (0,0) rectangle (1.5,0.5);
\draw[dotted] (0.2,0) -- (6.5,-1) (0.2,0.5) -- (6.5,1.5);

\begin{scope}[xshift=5cm,yshift=2cm]
\foreach \x/\y in {0.1/0.1, 0.3/0.2, 0.6/0.15, 0.9/0.2, 1.1/0.3, 1.3/0.2} {
    \fill (\x,\y) circle[radius=1.5pt];
}
\node[above right] at (0,0.5) {$X_{\ell, m, n-1}$};
\draw[rounded corners] (0,0) rectangle (1.5, .5);
\draw[dotted] (1.3,0) -- (-5,-1) (1.3,0.5) -- (-5,1.5);
\end{scope}

\end{tikzpicture}
  \caption{Construction for $X_{\ell, m, n}$ from $X_{\ell, m,n-1}$ and $X_{\ell, m,n-1}$.}\label{figlower2}
 \end{center}
\end{figure}

Hence, the resulting set does not contain $\ell$ collinear points and neither an $m$-cup nor an $n$-cap.  Finally,
\begin{eqnarray*}
    |X_{\ell, m,n}|& \geq & |X_{\ell,m-1,n}| + |X_{\ell,m,n-1}|\\\\
     & \geq & h_{\ell}(m-1,n) + h_{\ell}(m,n-1)\\\\
     & \geq & \frac{\ell - 1}{2}\binom{m + n - 5}{n - 2} - \frac{\ell - 3}{2}\binom{m + n - 7}{n-3} +  \frac{\ell - 1}{2}\binom{m + n - 5}{n - 3} - \frac{\ell - 3}{2}\binom{m + n - 7}{n-4}\\\\
     & = & \frac{\ell - 1}{2}\binom{m + n - 4}{n - 2} - \frac{\ell - 3}{2}\binom{m + n - 6}{n-3},
\end{eqnarray*}
as required.
\end{proof}

\subsection{Proof of Theorem \ref{thmlower}}

We now use Theorem~\ref{thm:capcuplower} to prove Theorem \ref{thmlower}, the statement that $ES_{\ell}(n)\le 
  \ell^2 \cdot 2^{n+C\sqrt{n\log n}}$ for all $\ell, n \ge 3$.
  
\begin{proof}[Proof of Theorem \ref{thmlower}]  

Let $X_{\ell, m,n}$ be the point set from the proof of Theorem~\ref{thm:capcuplower} with no $\ell$ collinear points, no $m$-cup and no $n$-cap, recalling that
$$|X_{\ell, m,n}| \geq h_{\ell}(m,n) =  \frac{\ell - 1}{2}\binom{m + n - 4}{n - 2} - \frac{\ell - 3}{2}\binom{m + n - 6}{n-3}.$$
Let $S$ be a unit circle in the plane centered at the origin and consider the arc $\alpha$ along $S$ from $(0,1)$ to $(1,0)$.  Place a very small flat copy of $X_{\ell,n,3}$ near $(0,1)$ and a very small flat copy of $X_{\ell, 3, n}$ near $(1,0)$.  Then evenly spread out very small flat copies of $X_{\ell, n-2,4}, X_{\ell, n-3,5},\ldots, X_{\ell, n - 2-i, 4 + i}, \ldots, X_{\ell, 4,n-2}$ along $\alpha$ from top to bottom, between  $X_{\ell,n,3}$ and $X_{\ell, 3, n}$.  We make each copy flat enough that the line generated by any two points in $X_{\ell, n-2 - i,4 + i}$ lies below $X_{\ell, n-2 -j,4 + j}$ for $j < i$ and lies above $X_{\ell, n-2 - j,4 + j}$ for $j > i$. See Figure \ref{figlower}.
Let $P$ be the final point set. Then 
\begin{eqnarray*}
 |P| & \geq &  \frac{\ell - 1}{2}\left( \binom{ n -1}{1} + \left(\sum\limits_{i = 0}^{n-6}\binom{n - 2}{2 + i}\right) + \binom{n - 1}{n - 2}\right) - \frac{\ell - 1}{3}\left( \binom{ n -3}{0} + \left(\sum\limits_{i = 0}^{n-6}\binom{n - 4}{1 + i}\right) + \binom{n - 3}{n - 3}\right)
   \\\\
  & = &\frac{\ell - 1}{2}  \sum\limits_{i = 0}^{n-2}\binom{n - 2}{i}  - \frac{\ell - 3}{2}   \sum\limits_{i = 0}^{n-4}\binom{n - 4}{ i}   \\\\
  &  = & \frac{\ell - 1}{2} 2^{n-2} - \frac{\ell - 3}{2}2^{n-4}\\\\
  & = & (3\ell-1)2^{n-5}.
\end{eqnarray*}

\begin{figure}
\begin{center}
\includegraphics[width=150pt]{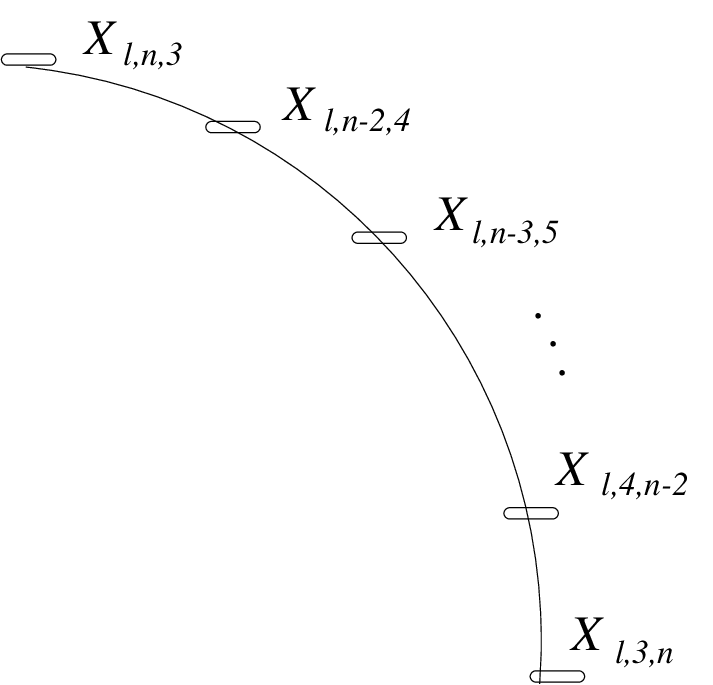}
  \caption{The lower bound construction for $ES_{\ell}(n)$.}\label{figlower}
 \end{center}
\end{figure}

Now suppose that $K \subset P$ is a subset in convex position.  If $K \subset X_{\ell,n-2-i, 4 + i}$ for some $i\ge 0$, then $|K| < n$.  If $K \subset X_{\ell,n,3}$, then $|K| < n$ by the structure of $X_{\ell,n,3}$.  A similar argument holds if $K \subset X_{\ell,3,n}$.

Suppose then that $K$ has a non-empty intersection with at least two of the parts.  Let $i$ be the minimum integer such that $K\cap  X_{\ell, n - 2-i, 4 + i} \neq \emptyset$ and $j$ be the maximum integer such that $K\cap  X_{\ell, n - 2-j, 4 + j} \neq \emptyset$. Assume that $0\le i \le j \le n-6$, that is, that $K$ omits both the highest and lowest sets in our construction.  By the flatness condition, for all $i < s < j$, we have $K\cap X_{\ell, n- 2 -s, 4 + s} \leq 1$.  Hence, 
$$|K| \leq (4 + i - 1) + (j-i-1) + (n - 2 - j -1 )  = n-1.$$

 Suppose now that $|K \cap X_{\ell, n,3}| \ne\emptyset$ and the largest $j$ such that $K\cap  X_{\ell, n - 2-j, 4 + j} \neq \emptyset$ satisfies $0\le j \le n-6$ (or that no such $j$ exists) and
$|K \cap X_{\ell, 3,n}| =\emptyset$.
If $|K \cap X_{\ell, n,3}| \ge 3$, then $K \cap X_{\ell, n,3}$ is a cup, which means that $K \subset  X_{\ell, n,3}$ and hence $|K|\le n-1$. Otherwise, $|K \cap X_{\ell, n,3}| \le 2$ and $|K|\le 2+(n-3-j)+j=n-1$.
A similar argument applies if $|K \cap X_{\ell, 3,n}| \ne\emptyset$.
Finally, if $|K \cap X_{\ell, n,3}|\ne\emptyset$ and
$|K \cap X_{\ell, 3,n}| \ne\emptyset$, then $|K|\le 2+(n-5)+2=n-1$.
Hence, $ES_{\ell}(n) \geq (3\ell-1)2^{n-5} + 1$, as required. 
\end{proof}

\section{A positive fraction cups-caps theorem for arbitrary point sets}\label{secpos}

In this subsection, we establish a positive fraction cups-caps theorem for arbitrary point sets.  Given a $k$-cap ($k$-cup) $X = \{x_1,\ldots, x_{k}\}$, where the points appear in order from left to right, we define the \emph{support of} $X$ to be the collection of open regions $\mathcal{C} = \{T_1,\ldots, T_{k}\}$, where $T_i$ is the region outside of $\conv(X)$ bounded by the segment $\overline{x_ix_{i + 1}}$ and by the lines $x_{i-1}x_{i}$, $x_{i+1}x_{i+2}$ (where $x_{k +1} = x_1$, $x_{k + 2} = x_2$, etc.).   See Figure \ref{figjunctures}.

	\def\junctures{
		\begin{tikzpicture}
		\definecolor{c1}{RGB}{0,0,0}
		\shade [c1] (2,5) --  (2.27,5.85)  -- (4,5) -- cycle;
		\shade [c1] (4,5) --  (5.66,5)  -- (6,4) -- cycle;
		\shade [c1] (6,4) --  (7,1)  -- (8,0.866) -- (8,3) -- cycle;
		\shade [c1] (0.33,0) --  (1,2)  -- (7,1) -- (7.33,0) -- cycle;
		\shade [c1] (0,2.166) -- (1,2) -- (2,5) -- (0,5) -- cycle;
		
		\node at (1,2) [vertex] {};
		\node at (1.3,2.2) {$x_1$};
		\node at (1,3.5) {$T_1$};
		\node at (2,5) [vertex] {};
		\node at (2.2,4.7) {$x_2$};
		\node at (4,5) [vertex] {};
		\node at (4,4.7) {$x_3$};
		\node at (2.7,5.3) {$T_2$};
		\node at (5.2,4.7) {$T_3$};
		\node at (6,4) [vertex] {};
		\node at (5.7,3.8) {$x_4$};
		\node at (7,2.5) {$T_4$};
		\node at (7,1) [vertex] {};
		\node at (6.7,1.3) {$x_5$};
		\node at (4,1) {$T_5$};
		\draw (0.33,0) to  (2.66,7);
		\draw (0,5) to  (8,5);
		\draw (0,7) to  (8,3);
		\draw (5,7) to  (7.33,0);
		\draw (0,2.166) to  (8,0.833);
		\node at (4,3) {$\conv(X)$};
		\end{tikzpicture}	}

		\begin{figure}
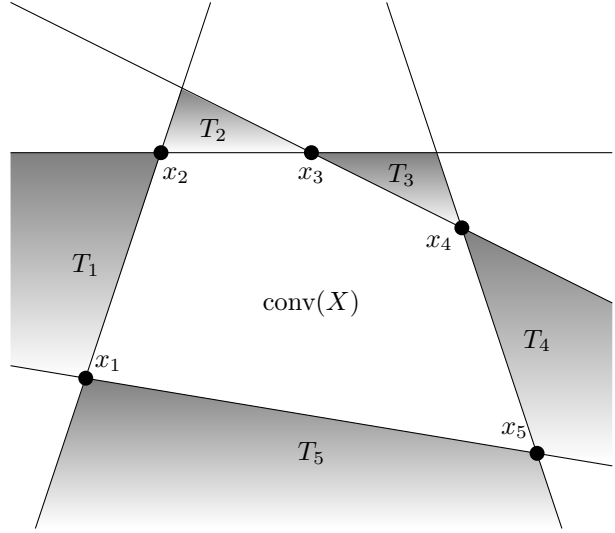
 \centerline{
		{\junctures}}
		\vskip0.2in
		\caption{Regions $T_i$ in the support of $X$.}\label{figjunctures}
	\end{figure}

	\begin{thm}  \label{essupersat}
		
There is a constant $c_1$ such that the following holds. Let $P$ be an $N$-point planar set with no $\ell$ points on a line and $N > c_1\ell\cdot 2^{32k}$. Then there is a $k$-element subset $X\subset P$ that is either a $k$-cup or a $k$-cap such that, for the regions $T_1,\ldots, T_{k-1}$ from the support of $X$, the point sets $P_i =P \cap T_i$  satisfy $|P_i| \geq N/2^{32k}$.   In particular, every $(k-1)$-tuple obtained by selecting one point from each $P_i$, $i = 1,\ldots, k-1$, is in convex position.
		\end{thm}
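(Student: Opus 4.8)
The plan is to build $X$ greedily, one vertex at a time from left to right, using an approximate bisection at each step so that a constant fraction of the surviving points is ``deposited'' into the support region being completed while another constant fraction remains available to continue the construction; over $O(k)$ steps this incurs a multiplicative loss of $2^{O(k)}$, and the factor $\ell$ pays for avoiding collinear triples. By reflecting in a horizontal line it suffices to produce either a $k$-cup or a $k$-cap, so I will describe the cup case.

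After choosing $i$ vertices I would maintain points $x_1<x_2<\cdots<x_i$ forming a cup, a ``live'' set $W_i$ of points each to the right of $x_i$ and above the line $x_{i-1}x_i$ (so that $x_1,\dots,x_i,q$ is again a cup for every $q\in W_i$), and the guarantee that every support region $T_j$ already determined by $x_1,\dots,x_i$ (with the convention $x_{k+1}=x_1$ for the top edge) contains at least $|W_{i-1}|/2^{O(1)}$ points of $P$. To pass from $i$ to $i+1$: delete from $W_i$ the fewer than $\binom{i}{2}(\ell-1)=O(k^2\ell)$ points lying on a line through two earlier vertices --- the sole use of the no-$\ell$-collinear hypothesis, which is what keeps $X$ in strictly convex position --- and then choose $x_{i+1}\in W_i$ by a median/staircase argument so that the points of $W_i$ above the line $x_ix_{i+1}$ and to the right of $x_{i+1}$ (a constant fraction of $W_i$) become $W_{i+1}$, while the points below $x_ix_{i+1}$ but above $x_{i-1}x_i$ (again a constant fraction) are the candidates for the region near the edge $\overline{x_ix_{i+1}}$ that becomes $T_i$ once $x_{i+2}$ trims it by the line $x_{i+1}x_{i+2}$. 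Thus $x_{i+1}$ is chosen subject to a bounded number of approximate-halving constraints (continue the cup; keep a fraction for $W_{i+1}$; keep a fraction of the pool that $x_{i+1}$ finalizes into $T_{i-1}$), so after $k$ vertices $|W_k|$ and every $|P\cap T_j|$ are still at least $N/2^{O(k)}$. Taking $c_1$ large and using $N>c_1\ell\cdot 2^{32k}$, all the sets in play stay above both $N/2^{32k}$ and $\Omega(k^2\ell)$, so the deletions never exhaust a working set and we finish with $|P\cap T_i|\ge N/2^{32k}$ for each $i\le k-1$.

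Two remarks. The cup/cap dichotomy enters in the median step: it can happen that no choice of $x_{i+1}$ leaves a positive fraction of $W_i$ both above $x_ix_{i+1}$ and to its right (e.g.\ if $P$ lies on a strictly concave arc there is no $3$-cup at all), and exactly in that case $W_i$ behaves like a cap, so --- as in the classical Erd\H{o}s--Szekeres argument --- one restarts the symmetric construction to grow a $k$-cap with populated supports; since each step of one construction or the other makes progress, one of them terminates successfully. (Alternatively, one may first use \eqref{escupcap}, or Theorem~\ref{cupscaps} in the collinear setting, to decide in advance whether $P$ is cup-rich or cap-rich.) The ``in particular'' clause is then immediate from the definition of support: if $y_i\in P\cap T_i$ for $i=1,\dots,k-1$, then $y_i$ lies below $\overline{x_ix_{i+1}}$ and between the lines $x_{i-1}x_i$ and $x_{i+1}x_{i+2}$, which forces the consecutive turns $y_{i-1}y_iy_{i+1}$ to be consistently oriented, so $\{y_1,\dots,y_{k-1}\}$ is itself a cup and hence in convex position.

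The step I expect to be the main obstacle is keeping a positive fraction in \emph{all} of the support regions simultaneously. In the classical positive-fraction Erd\H{o}s--Szekeres theorem one only needs a bounded collection of clusters to stay large, whereas here $T_1,\dots,T_{k-1}$ interlock --- $T_i$ is cut out by the four consecutive vertices $x_{i-1},x_i,x_{i+1},x_{i+2}$ --- so the bisection that fills $T_i$ must not rob the pool feeding $W_{i+1}$, which must still be able to supply $T_{i+1}$ and $T_{i+2}$ two steps later. Arranging the order of cuts so that this bookkeeping closes with a loss of only $2^{O(k)}$ (rather than something like $k^{\Theta(k)}$) is the crux, and the explicit exponent $32k$ presumably reflects a careful count of the bounded number of halvings charged to each vertex.
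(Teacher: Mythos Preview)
Your approach is entirely different from the paper's. The paper does not build $X$ greedily. It applies a simplicial-partition lemma (Matou\v{s}ek/Chan, Lemma~\ref{chan}) to split $P$ into $r\approx 2^{16k}$ disjoint cells, each holding at least $N/(8r)$ points, with any line crossing only $O(\sqrt r)$ cells. A tangent-line count bounds the number of \emph{bad} triples of cells (those pierceable by a single line) by $O(r^{5/2})$; a random sample at rate $p\sim r^{-3/4}$ followed by deleting one cell from each surviving bad triple isolates $\Omega(r^{1/4})\ge 4^{2k}$ cells with no bad triple among them. After a vertical sweep the surviving subsets are $x$-separated, so one representative from each yields a genuinely general-position set, in which the classical cups-caps theorem~(\ref{escupcap}) gives a $2k$-cap $x_1,\dots,x_{2k}$; the absence of bad triples forces \emph{every} transversal $(q_1,\dots,q_{2k})$ with $q_i\in P'_i$ to be a $2k$-cap as well. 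Taking $X=\{x_1,x_3,\dots,x_{2k-1}\}$, the even-indexed cells then land inside the support regions $T_1,\dots,T_{k-1}$ with $|P'_{2i}|\ge N/2^{32k}$. The no-$\ell$-collinear hypothesis is used only once, to ensure $|P_i|>\ell$ so that no cell is contained in a line (needed for the tangent-line count).

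Your greedy sketch has a real gap precisely where you flag it. At step $i$ you need a single $x_{i+1}\in W_i$ such that the line $x_ix_{i+1}$ simultaneously (a) leaves a constant fraction of $W_i$ above it \emph{and to the right of $x_{i+1}$}, (b) leaves a constant fraction of $W_i$ below it, and (c) keeps a constant fraction of the already-reserved pool for $T_{i-1}$ on the correct side. Conditions (a)--(c) constrain both the slope of $x_ix_{i+1}$ and the $x$-coordinate of $x_{i+1}$, and these are coupled once $x_i$ is fixed: the angular median of $W_i$ about $x_i$ balances ``above'' against ``below'' but may have the largest $x$-coordinate in $W_i$, killing $W_{i+1}$; the $x$-median gives no control on slope. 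No combination of halvings you describe resolves this, and you have not supplied one. Your cup/cap switching is also not an argument: that each individual step makes progress in one direction or the other does not imply either process runs to completion, and invoking Theorem~\ref{cupscaps} only produces a single cup or cap, not the ``cup-richness'' you would need to commit in advance. As written this is a plan whose central lemma is missing, not a proof; the paper's route via simplicial partitions is what actually delivers the $2^{O(k)}$ loss with linear dependence on $\ell$.
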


Let us remark that a positive-fraction cups-caps theorem for point sets in general position was first proved by Pach and Solymosi~\cite{PS} and can be found more explicitly in~\cite{PV}. Its proof is a simple supersaturation argument using (\ref{escupcap}).  Unfortunately, this approach for point sets with no $\ell$ collinear members gives a rather poor dependency on $\ell$.  Instead, we will make use of simplicial partitions together with  the probabilistic method. First, we need some simple definitions. 
A \emph{cell} $\Delta\subset \mathbb{R}^2$ is a 1 or 2-dimensional simplex.  Given a cell $\Delta \subset \mathbb{R}^2$, we say that a line $L$ \emph{crosses} $\Delta$ if $L$ intersects, but does not contain, $\Delta$. 


\begin{lemma}[\cite{mat,chan}]\label{chan}
Let $P$ be a set of $N$ points in the plane.  Then, for any integer $r > 0$, there are disjoint subsets $P_1,\ldots, P_r$ of $P$ and disjoint cells $\Delta_1, \ldots, \Delta_r$ in $\mathbb{R}^2$, with $P_i \subset \Delta_i$, such that $|P_i| \geq N/(8r)$ and every line in the plane crosses at most $O(\sqrt{r})$ cells $\Delta_i$.
\end{lemma}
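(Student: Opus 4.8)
This is Matou\v{s}ek's simplicial partition theorem in the plane, together with Chan's refinement that the cells may be taken pairwise disjoint, so the plan is to recall the standard argument. The main external tool is the \emph{Cutting Lemma}: for any finite set $L$ of lines in the plane equipped with positive weights and any $t\ge 1$, there is a subdivision of $\mathbb{R}^2$ into $O(t^2)$ triangles such that the lines of $L$ crossing any one triangle have total weight at most a $(1/t)$-fraction of the total weight of $L$ (the unweighted form is classical, and the weighted form follows from the usual random-sampling / $\varepsilon$-net argument). Throughout I would fix $t := c_0\sqrt r$ for a small absolute constant $c_0$, so that $O(t^2)=O(r)$ and each triangle of a cutting of $L$ is crossed by only an $O(1/\sqrt r)$-fraction of the weight.

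First I would build a partition with the correct crossing number but \emph{without} insisting that the cells be disjoint, by a greedy reweighting argument. Maintain a ``remaining'' set $Q$, initially $Q=P$, and a finite test set $L$ of lines each of weight $1$ (as a first approximation, all lines spanned by pairs of points of $P$). At a general step, with cells $\Delta_1,\dots,\Delta_{i-1}$ already chosen, each $\ell\in L$ carries weight $2^{c(\ell)}$, where $c(\ell)$ counts how many earlier cells $\ell$ crosses; let $W$ be the total weight. If $|Q|\ge N/2$, apply the weighted Cutting Lemma to $L$ with parameter $t$: among its $O(r)$ triangles, some triangle $\Delta_i$ contains at least $|Q|/O(r)\ge N/(4r)$ points of $Q$ (for $c_0$ small enough) and is crossed by lines of total weight at most $W/t$. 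Let $P_i$ be any $\lceil N/(8r)\rceil$ points of $Q\cap\Delta_i$, double the weight of every line crossing $\Delta_i$, replace $Q$ by $Q\setminus P_i$, and iterate. Since each step removes exactly $\lceil N/(8r)\rceil$ points and we stop only when $|Q|<N/2$, at least $4r\ge r$ steps occur, and I keep the first $r$ cells.

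For the crossing bound: each step multiplies the total weight by at most $1+1/t$, so after $m=O(r)$ steps $W\le |L|\,(1+1/t)^{O(r)}\le |L|\,e^{O(\sqrt r)}$, and since the final weight of any $\ell\in L$ is $2^{c(\ell)}\le W$ we get $c(\ell)\le \log_2|L|+O(\sqrt r)$. To upgrade this to the clean bound $O(\sqrt r)$ for \emph{every} line in the plane, I would (a) pass from $L$ to all lines, using that ``crosses a given triangle'' is locally constant off a measure-zero set of lines plus a standard perturbation argument, and (b) eliminate the $\log_2|L|=O(\log N)$ term by replacing the pair-lines of $P$ by the $O(r)$ edge-lines of a single $(1/\sqrt r)$-cutting of them, so that the effective test set has size $r^{O(1)}$; these two reductions are exactly Matou\v{s}ek's~\cite{mat}, and I would import them.

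It remains to make the cells pairwise disjoint, which is Chan's contribution~\cite{chan}. One cannot simply intersect each new triangle $\Delta_i$ with the complement of $\Delta_1\cup\cdots\cup\Delta_{i-1}$, since that leftover region may fragment into $\Omega(i)$ sub-triangles and destroy the $\Omega(N/r)$ lower bound on $|P_i|$. Instead the cells are produced hierarchically, in a binary-space-partition-like fashion, so that at every stage the free space has bounded complexity and only a constant fraction of the enclosed points is lost per subdivision; importing this gives disjoint cells $\Delta_i$ with $|P_i|\ge N/(8r)$ while keeping the crossing number $O(\sqrt r)$. \textbf{The hard part} is precisely this last coordination of the greedy cell-selection with the disjointness requirement, together with the test-set bookkeeping in step (b); the reweighting argument itself is short, and the pigeonhole and stopping-rule estimates are routine once the constant $c_0$ is fixed.
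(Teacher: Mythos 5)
The paper does not prove this lemma at all: it is imported verbatim from Matou\v{s}ek \cite{mat} and Chan \cite{chan}, with only the remark that Matou\v{s}ek's original cells need not be disjoint and that Chan's version guarantees disjointness. Your outline is a faithful reconstruction of the standard argument behind those citations --- the weighted cutting lemma, the multiplicative reweighting with the $(1+1/t)^{O(r)}$ versus $2^{c(\ell)}$ comparison, the test-set reduction to kill the $\log|L|$ term, and Chan's hierarchical construction for disjointness --- and you correctly identify which pieces are routine and which are the genuinely hard imported components. Since the two steps you defer (the test-set bookkeeping and the disjointness coordination) are exactly the substance of the cited papers, your proposal is really an annotated citation rather than a self-contained proof, but that is precisely the status the lemma has in this paper, and nothing in your sketch is wrong.
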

 
Let us remark that in the original version of simplicial partitions due to Matousek \cite{mat}, the cells $\Delta_i$ may not necessarily be disjoint.  However, in a newer version due to Chan~\cite{chan}, disjointness can also be guaranteed.

	\begin{proof}[Proof of Theorem \ref{essupersat}]
Let $c_1>c_2$ be large constants that will be determined later.  Set $r = c_22^{16k}$. Then we apply Lemma \ref{chan} with parameter $r$ to obtain subsets $P_1, \ldots, P_r \subset P$ and pairwise disjoint cells $\Delta_1,\ldots, \Delta_r \subset \mathbb{R}^2$ such that $|P_i| \geq N/(8r)$ and $P_i\subset \Delta_i$.  Moreover, every line in the plane crosses at most $O(\sqrt{r})$ cells $\Delta_i$.  Since 

$$|P_i|  \geq \frac{N}{8r} \geq \frac{c_1\ell 2^{32k}}{8c_22^{16k}} > \ell,$$

no line contains a cell $\Delta_i$.   We call a triple $(P_i,P_j,P_s)$ of parts \emph{bad} if there is a line intersecting all three cells $\Delta_i,\Delta_j$ and $\Delta_s$.  Otherwise, we call the triple $(P_i,P_j,P_s)$ \emph{good}.  

If there are three disjoint cells $\Delta_i, \Delta_j, \Delta_s$ and a line $L$ that intersects all three, then we can translate and rotate $L$ so that $L$ is tangent to two of the cells and intersects the third.  Hence, for every bad triple $(P_i, P_j, P_s)$, there is a line $L$ tangent to two of the cells, say $\Delta_i$ and $\Delta_j$, such that $L$ intersect $\Delta_s$.  For every pair $\{i, j\}$,
there are at most 4 tangent lines for $\Delta_i$ and $\Delta_j$ and, by our application of Lemma~\ref{chan}, there are at most $O(\sqrt{r})$ parts $\Delta_s$ that intersect any of these 4 lines. Hence, the number of bad triples $(P_i,P_j,P_s)$ is at most $O(r^2\sqrt{r}) = c'r^{5/2}$, where $c'$ is an absolute constant.  

We pick each part $P_i$ with probability $p = 1/(\sqrt{4c'}r^{3/4})$.  Then the expected number of parts chosen is $pr$ and the expected number of bad triples among them is at most
$$p^3 c' r^{5/2} \leq pr/4.$$
Hence, by the Chernoff bound, we can select at least $3pr/4 = \Omega(r^{1/4})$ parts $P_i$ such that the number of bad triples among them is at most $pr/2$. 
By deleting one part from each bad triple, we obtain $pr/4$ parts $P_i$ such that every triple among them is good.  For simplicity, let $P_1,\ldots, P_t$ be the remaining parts, where $t = pr/4 = \Omega(r^{1/4}).$   By sweeping a vertical line from left to right, we can greedily pick subsets $P'_i \subset P_i$, $1 \leq i \leq t$, such that no vertical line intersects any two of the convex sets $C_i = \conv(P'_i)$ and $$|P'_i| \geq |P_i|/t > \Omega(N/r^{5/4}).$$

Without loss of generality, we can assume that the subsets $P'_1,\ldots, P'_t$ appear from left to right.  That is, the $x$-coordinate of each point in $P'_i$ is less than the $x$-coordinate of each point in $P'_j$ for $i < j$.   Let $Q$ be the $t$-element point set obtained by selecting one point from each of the remaining $P'_i$.  Then $Q$ is in general position.  By setting $c_2$ sufficiently large, we have $|Q| = t  = pr/4 \geq 4^{2k}$.  By the Erd\H os--Szekeres cups-caps theorem (\ref{escupcap}), there is either a $(2k)$-cup or a $(2k)$-cap $X\subset Q$.  We will assume that $X$ is a $(2k)$-cap, since a symmetric argument works otherwise. Let $X = \{x_1,\ldots, x_{2k}\}$ be the points of $X$ ordered from left to right and let us now assume that $P_i'$ is the part that corresponds to the point $x_i \in X$.

\begin{obs}
    If $q_1 \in P'_1, \ldots, q_{2k} \in P'_{2k}$, then $q_1,\ldots, q_{2k}$ forms a $(2k)$-cap.  
\end{obs}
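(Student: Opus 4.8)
The plan is to show that any transversal $q_1 \in P_1', \dots, q_{2k} \in P_{2k}'$ forms a $(2k)$-cap by a local argument: it suffices to check that every consecutive triple $q_{i-1}, q_i, q_{i+1}$ (for $2 \le i \le 2k-1$) makes a "cap turn", i.e.\ $q_i$ lies above the segment $\overline{q_{i-1}q_{i+1}}$, since a left-to-right sequence of points all of whose consecutive triples are cap turns is automatically a $(2k)$-cap. First I would recall that the parts $P_1', \dots, P_{2k}'$ appear in left-to-right order, separated by vertical lines, so the $x$-order of the $q_i$ is exactly $q_1 < q_2 < \dots < q_{2k}$, matching the order of the representatives $x_1, \dots, x_{2k}$ of the $(2k)$-cap $X$. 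The key point is that the triple $(P_{i-1}', P_i', P_{i+1}')$ is \emph{good}, meaning no single line meets all three cells $\Delta_{i-1}, \Delta_i, \Delta_{i+1}$; in particular, since $q_{i-1}, q_i, q_{i+1}$ lie in these three respective cells, they cannot be collinear, and moreover the orientation of the triple $q_{i-1} q_i q_{i+1}$ is \emph{forced} to agree with the orientation of $x_{i-1} x_i x_{i+1}$.

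The crucial sub-step is this orientation-rigidity claim: if $(P_{i-1}', P_i', P_{i+1}')$ is a good triple, then all transversals $(a,b,c)$ with $a \in P_{i-1}'$, $b \in P_i'$, $c \in P_{i+1}'$ have the same orientation (all clockwise or all counterclockwise). Indeed, the set of transversals of three convex sets forms a connected family, and the orientation (sign of the determinant) is a continuous function of $(a,b,c)$ that vanishes exactly when the three points are collinear; if two transversals had opposite orientations, continuity would produce a transversal that is collinear, i.e.\ a line meeting all three cells, contradicting goodness. Since $x_{i-1} \in P_{i-1}'$ (as $x_i$ is the chosen representative point in $P_i'$, and similarly for the neighbors) is itself such a transversal and $x_{i-1} x_i x_{i+1}$ is a cap turn (because $X$ is a $(2k)$-cap), every transversal triple is a cap turn. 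Applying this to $q_{i-1} q_i q_{i+1}$ for each $i$ shows every consecutive triple of the $q$'s is a cap turn.

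Finally I would assemble: a left-to-right point sequence in which every consecutive triple turns the same (cap) way has the property that each point lies strictly above the segment joining its neighbors, hence above the chord joining the first and last points, hence all points lie on the upper boundary of their convex hull with the segment $\overline{q_1 q_{2k}}$ as the single lower edge — that is, $q_1, \dots, q_{2k}$ is a $(2k)$-cap. The main obstacle, and the only part needing care, is the orientation-rigidity claim: one must be slightly careful that the relevant transversal family is genuinely connected (it is, being a product of three convex sets projected to triples, or more simply because each $P_i'$ is contained in its convex cell $C_i$ and one can move $a$, then $b$, then $c$ independently within convex sets), and that the representatives $x_{i-1}, x_i, x_{i+1}$ do lie in the corresponding $P'$-parts so that the cap condition on $X$ transfers. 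Everything else is the standard fact that "all consecutive triples cap $\Rightarrow$ global cap", which follows by an easy induction on $2k$.
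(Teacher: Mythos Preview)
Your proposal is correct and uses essentially the same idea as the paper: the ``goodness'' of the triple of parts (no line meeting all three cells) forces every transversal triple to have the same orientation as the reference triple $(x_i,x_j,x_s)$, via exactly the continuity/intermediate-value argument you describe. The only cosmetic difference is that the paper applies this directly to \emph{every} triple $(q_i,q_j,q_s)$ (since all triples of surviving parts are good), whereas you reduce first to consecutive triples and then invoke the standard ``all consecutive triples cap $\Rightarrow$ global cap'' lemma; both are fine, and the paper's route is slightly shorter.
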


\begin{proof}
    It suffices to show that every triple in $\{q_1,\ldots, q_{2k}\}$ forms a cap.  For the sake of contradiction, suppose $(q_i,q_j,q_s)$ is a cup.  Since $(x_i,x_j,x_s)$ is a cap, this implies that the convex sets $\conv(P'_i), \conv(P'_j), \conv(P'_s)$ can be pierced by a line, a contradiction.
\end{proof}

Set $X' = \{x_1,x_3,\ldots, x_{2k-1}\}$.  Let $T_1,\ldots, T_k$ be the support of $X'$.  Then the $k$ parts $P'_{2}, P'_4, \ldots, P'_{2k}$ must lie in $T_1,\ldots, T_k$, respectively.  Moreover, by setting $c_1$ sufficiently large, each such part $P'_{2i}$ satisfies
$$|P'_{2i}| \geq \Omega\left(\frac{N}{r^{5/4}}\right) \geq \frac{N}{2^{32k}},$$
as required. 
\end{proof}

\section{Big line or big convex polygon -- Proof of Theorem~\ref{convex}}\label{secpf}
 For the proof of Theorem \ref{convex}, we will need the following more general version of Theorem \ref{cupscaps}. Let $K$ be a convex set in the plane.  Then we say that the point set $P$ \emph{avoids} $K$ if the line generated by any two points in $P$ is disjoint from $K$.  We say that $K$ and $P$ are $\emph{separated}$ if there is a line that separates $K$ and $\conv(P)$. 
Suppose now that $K$ is a convex set in the plane, $P$ is a finite point set that avoids $K$ and $K$ and $P$ are separated. Then, given a subset $X\subset P$, we say that $X$ is an \emph{inner-cap} with respect to $K$ if, for each point $x \in X$, there is a line that separates $x$ from $(X\setminus\{x\} )\cup K$. Similarly, we say that $X\subset P$ is an \emph{outer-cup} with respect to $K$ if, for each point $x \in X$, there is a line that separates $x\cup K$ from $X\setminus\{x\}$.

\begin{thm} \label{cupscapsC}
There is an absolute constant $c > 0$ such that the following holds.  Let $K$ be a convex set in the plane and let $P$ be a finite point set in the plane that avoids $K$. 
 If $K$ and $P$ are separated and

$$|P| \geq   c(\min\{m-1,n-1\}  + \ell)\cdot {m+n-4 \choose n-2},$$  

then $P$ contains either $\ell$ collinear points, an outer-cup with respect to $K$ of size $m$ or an inner-cap with respect to $K$ of size $n$.  
\end{thm}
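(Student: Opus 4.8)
The plan is to reduce Theorem~\ref{cupscapsC} to Theorem~\ref{cupscaps} by a projective transformation. Since $K$ and $P$ are separated, there is a line $L$ with $K$ strictly on one side and $\conv(P)$ strictly on the other. The key observation is that a projective transformation $\varphi$ that sends $L$ to the line at infinity — chosen so that $P$ is sent to a bounded region and $K$ is sent ``past infinity'' — converts the inner-cap/outer-cup structure relative to $K$ into the ordinary cap/cup structure. More precisely, I would first argue that, because $P$ avoids $K$, no line through two points of $P$ meets $K$, hence no such line meets $L$ on the $K$-side; after applying $\varphi$ the images of these lines never separate the image of $P$ from ``the direction of $K$.'' I would set up coordinates so that $L$ is vertical and $K$ lies to its left, $P$ to its right; then $\varphi$ can be taken to fix the $x$-axis pointwise and act on the second coordinate so that points far to the left of $L$ go to $-\infty$. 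Under this map, ``there is a line separating $x$ from $(X\setminus\{x\})\cup K$'' becomes ``$x$ is a vertex of $\conv(\varphi(X))$ lying below every other point'' in the appropriate sense, i.e.\ $\varphi(X)$ is a cap; dually, outer-cups become cups. Collinearity and the number of points are preserved, so $\ell$ collinear points correspond to $\ell$ collinear points.

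Concretely, the steps are: (1) normalize by an affine map so that the separating line is $\{x = 0\}$, $K \subseteq \{x < 0\}$, and $P \subseteq \{x > 0\}$; (2) apply the projective transformation $\varphi(x,y) = \big(\tfrac{x}{x}, \tfrac{y}{x}\big) = \big(1, \tfrac{y}{x}\big)$ — more usefully, $\varphi(x,y) = \big(\tfrac{1}{x}, \tfrac{y}{x}\big)$, which is a bijection from $\{x>0\}$ to a half-plane, sends the line $\{x=0\}$ to infinity, and sends a line $\{ax+by+c=0\}$ (with $c \ne 0$, as holds for every line through two points of $P$ since such lines miss $K$ and hence miss $\{x=0\}$... ) to another line; (3) check that the $x$-coordinates of $P$ being distinct is preserved (or re-perturb), that collinearity is preserved, and that $|\varphi(P)| = |P|$; (4) verify the crucial dictionary: $X \subseteq P$ is an outer-cup with respect to $K$ if and only if $\varphi(X)$ is a cup (in the Erd\H os--Szekeres sense) in $\varphi(P)$, and $X$ is an inner-cap with respect to $K$ iff $\varphi(X)$ is a cap; (5) apply Theorem~\ref{cupscaps} to $\varphi(P)$ with the same parameters $m,n,\ell$ to extract either $\ell$ collinear points, an $m$-cup, or an $n$-cap, and pull back along $\varphi^{-1}$.

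The main obstacle is step (4), the geometric dictionary, together with making sure the projective map is genuinely well-defined and orientation-preserving on all of $P$ and all secant lines of $P$. The point is that a line $\ell_0$ separates $x$ from $(X\setminus\{x\}) \cup K$ precisely when $\ell_0$ has $K$ and $X \setminus \{x\}$ on one side and $x$ on the other; since $K$ sits beyond the line at infinity after applying $\varphi$, this says $\varphi(x)$ lies ``below'' the line $\varphi(\ell_0)$ while the rest of $\varphi(X)$ lies above — exactly the condition that $\varphi(x)$ is an extreme point on the lower hull, so $\varphi(X)$ is a cap. One has to be careful that the relevant lines $\ell_0$ are precisely those not meeting $K$, which is guaranteed by the avoidance hypothesis, so their images are honest affine lines and the above reasoning is valid. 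A secondary technical point: Theorem~\ref{cupscaps} as stated does not explicitly give that the $m$-cup or $n$-cap is a subset of $P$, but its proof does produce such a subset, so pulling back is unproblematic. Once the dictionary is established, the bound is immediate since all quantities ($|P|$, the count of collinear points, $m$, $n$, $\ell$) transfer verbatim, and we may take the same constant $c$ as in Theorem~\ref{cupscaps}.
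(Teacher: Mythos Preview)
Your projective-transformation strategy is different from the paper's. The paper simply orders $P$ radially around $K$, observes that every triple in $P$ is either an inner-cap or an outer-cup with respect to $K$ and that these notions are transitive along the radial order, and then reruns the proof of Theorem~\ref{cupscaps} verbatim with this radial order in place of the $x$-coordinate order.

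Your step~(4), however, has a genuine gap. Sending the separating line $L=\{x=0\}$ to infinity via $\varphi(x,y)=(1/x,y/x)$ does not send $K$ ``past infinity'': one simply obtains $\varphi(K)\subset\{u<0\}$ and $\varphi(P)\subset\{u>0\}$, still separated by the finite line $\{u=0\}$ (the image of the old line at infinity). A short computation shows that if a line $\ell_0$ separates $x$ from $(X\setminus\{x\})\cup K$, then $\varphi(\ell_0)$ separates $\{\varphi(x)\}\cup\varphi(K)$ from $\varphi(X\setminus\{x\})$ (the sign flips because points of $K$ have negative $x$-coordinate); that is, inner-caps with respect to $K$ become outer-cups with respect to $\varphi(K)$, not ordinary caps, and nothing has been reduced. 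Your ancillary claims are also off: any non-vertical line through two points of $P\subset\{x>0\}$ certainly meets $L=\{x=0\}$, so the parenthetical ``such lines miss $\{x=0\}$'' is false. A variant of your idea that can be made to work sends a single point $k\in K$, rather than $L$, to the downward point at infinity; then caps and cups in the image do correspond to inner-caps and outer-cups with respect to $k$. But you would still owe an argument that an inner-cap with respect to $k$ is automatically one with respect to all of $K$ --- this is where the avoidance hypothesis is really used (for each interior vertex of the cap one can take the separating line through its two neighbors in $X\subset P$, hence disjoint from $K$), and the two extreme vertices need a further argument that your proposal does not supply.
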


\begin{proof}
Let $|P| = N$.  Without loss of generality, we can assume that the line $L$ which separates $K$ and $P$ is horizontal, that $K$ lies below $L$ and that $P$ lies above $L$.   By considering $\conv(K\cup p)$ for each $p \in P$, we can radially order the elements in $P = \{p_1,\ldots, p_N\}$ with respect to $K$ in clockwise order, from left to right.

Notice that every triple in $P$ is either an inner-cap with respect to $K$ or an outer-cup with respect to $K$.  Moreover, for $i < j < s < t$, if $\{p_i,p_j,p_s\}$ and $\{p_j,p_s,p_t\}$ are both inner-caps with respect to $K$ (outer-cups with respect to $K$), then every triple in $\{p_i,p_j,p_s,p_t\}$ is an inner-cap with respect to $K$ (outer-cup with respect to $K$).  Thus, by following the proof of Theorem \ref{cupscaps} almost verbatim, the statement follows.  
\end{proof}

We are now ready to prove Theorem \ref{convex}.

\begin{proof}[Proof of Theorem \ref{convex}]
Let $P$ be an $N$-element planar point set in the plane, where $N =  \ell^2 \cdot 2^{n+ C\sqrt{n \log n}} $ with $C$ a sufficiently large absolute constant.  We can assume that no two points in $P$ have the same $x$-coordinate.  Moreover, we can assume that there are no $\ell$ collinear members in $P$, since otherwise we would be done.

For the sake of contradiction, suppose $P$ does not contain $n$ points in convex position.  Set $k = 2\lceil\sqrt{n\log n}\rceil$.   We apply Theorem~\ref{essupersat} to $P$ with parameter $k  + 3 $, obtaining a subset $X = \{x_1,\ldots, x_{k+3}\}\subset P$ such that $X$ is either a cup or a cap, where we assume that the points of $X$ appear in order from left to right.  Moreover, the regions $T_1,\ldots, T_{k+2}$ in the support of $X$ satisfy
$$|T_i\cap P| \geq \frac{N}{2^{32(k+3)}}.$$   
Set $P_i \subset T_i\cap P$ to be the set of points of $P$ in the interior of $T_i$, for $i = 1,\ldots, k+2$.  Hence,
$$|P_i| \geq  \frac{N}{2^{32(k+3)}} - 3\ell \geq  \frac{N}{2^{40k}}.$$
We will now assume that $X$ is a cap, since a symmetric argument works in the other case.  

Consider the subset $P_i\subset P$ and the region $T_i$ for some fixed $i \in \{2,\ldots, k+1\}$.  Let $B_i$ be the segment $\overline{x_{i-1}x_{i+2}}$.  The point set $P_i$ naturally comes with a partial order $\prec_i$, where $p\prec_i q$ if $p\neq q$ and $p \in \conv(B_i\cup q)$.  Note that $p\prec_i q$ if $p$ lies on the boundary of $\conv(B_i\cup q).$   Following Holmsen et al.~\cite{holm}, for each $P_i$, let

\begin{enumerate}
    \item $h_i$ be the size of the longest antichain with respect to $\prec_i,$

    \item $v_i$ be the size of the longest chain with respect to $\prec_i$, 

    \item $a_i$ be the size of the largest inner-cap with respect to $x_{i+ 1}$ that is also a chain with respect to $\prec_i$,
    
    \item  $b_i$ be the size of the largest inner-cap with respect to $x_{i}$ that is also a chain with respect to $\prec_i$,  
    
    \item $w_i$ be the size of the largest inner-cap with respect to $B_i$ that is also an antichain with respect to $\prec_i$  and
    
    \item $z_i$ be the size of the largest outer-cup with respect to $B_i$ that is also an antichain with respect to $\prec_i$.   
\end{enumerate}

By Dilworth's theorem \cite{dil}, we have $v_ih_i \geq |P_i|$. 
We also clearly have $z_i < n$.   We now make the following observations.

\begin{obs} \label{obs1}
$$w_2 + w_4 + \cdots + w_{k} < n$$

\noindent and $$w_3 + w_5 + \cdots  + w_{k+1} < n.$$
\end{obs}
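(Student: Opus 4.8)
The plan is to prove both displayed inequalities the same way, so I focus on $w_2+w_4+\cdots+w_k<n$; the other follows by shifting every index up by $1$, since the pockets $T_3,T_5,\dots,T_{k+1}$ are pairwise non-adjacent for exactly the same reason as $T_2,T_4,\dots,T_k$. For each even $i\in\{2,4,\dots,k\}$ fix $W_i\subseteq P_i$ with $|W_i|=w_i$ that is both an inner-cap with respect to $B_i=\overline{x_{i-1}x_{i+2}}$ and an antichain with respect to $\prec_i$. I would first record that each $W_i$ is a cap: since $X$ is a cap, the points $x_{i-1},x_i,x_{i+1},x_{i+2}$ form a $4$-cap, so $B_i$ lies below the edge $\overline{x_ix_{i+1}}$ and hence below all of $P_i$; thus the lower hull of $W_i\cup\{x_{i-1},x_{i+2}\}$ is the segment $B_i$, and the inner-cap condition says precisely that each point of $W_i$ is a vertex of $\conv(W_i\cup\{x_{i-1},x_{i+2}\})$, so $W_i$ is a cap lying above $B_i$ between $x_{i-1}$ and $x_{i+2}$. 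I would also note that, because the bounding lines $x_{i-1}x_i$ and $x_{i+1}x_{i+2}$ of $T_i$ have strictly decreasing slopes along $X$, the interior of $T_i$ lies in the vertical strip between $x_i$ and $x_{i+1}$; hence $W_2,W_4,\dots,W_k$ appear strictly from left to right and have pairwise disjoint $x$-ranges.

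The main step is to show that $W:=W_2\cup W_4\cup\cdots\cup W_k$ is in convex position --- in fact a cap. Granting this, $w_2+w_4+\cdots+w_k=|W|<n$ since $P$ has no $n$ points in convex position (and the cap vertices $x_j$ between consecutive blocks lie below $X$, hence below the chords of $W$, so they cannot be added to improve the count). To check convexity I would verify, for each triple $p'\prec p\prec p''$ of points of $W$ in $x$-order, that $p$ lies strictly above the chord $\overline{p'p''}$: if the three lie in one block this is that block's cap property, and if the triple meets two or three blocks the pocket geometry does the work. Concretely, $p$ in block $W_i$ is confined to the thin pocket $T_i$ --- squeezed between $\overline{x_ix_{i+1}}$ below and the lines $x_{i-1}x_i,x_{i+1}x_{i+2}$ above --- while the other two points lie in their own pockets, separated from $T_i$ by the cap vertices $x_{i-1},x_i$ on the left or $x_{i+1},x_{i+2}$ on the right and by an entire skipped odd pocket, and combining this with the inner-cap/antichain structure of each block forces $\overline{p'p''}$ to pass below $p$. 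Here the antichain condition with respect to $\prec_i$ is what stops a block from folding back on itself inside its pocket, and skipping every other pocket is what leaves room between consecutive blocks for the slopes at the joints to fit into a single decreasing sequence --- which is exactly where trying to use all $k+2$ pockets simultaneously would break down. This triple-by-triple orientation check, turning the qualitative picture of the pockets into a rigorous argument, is the one genuinely technical point, and it is the arbitrary-point-set analogue of the cap-gluing step in the arguments of Suk~\cite{S} and Holmsen et al.~\cite{holm}.

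Once $W$ is known to be in convex position the bound $w_2+w_4+\cdots+w_k<n$ is immediate, and running the identical argument on the odd blocks $W_3,W_5,\dots,W_{k+1}$ gives $w_3+w_5+\cdots+w_{k+1}<n$, which proves the observation.
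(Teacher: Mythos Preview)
Your overall strategy is the paper's: set $W=W_2\cup W_4\cup\cdots\cup W_k$, show it is a cap, and conclude $|W|<n$. The difference is in how you justify that $W$ is a cap. You propose a triple-by-triple orientation check and leave the cross-block case to ``pocket geometry,'' explicitly flagging it as the one technical point you do not actually prove; that is a genuine gap in the write-up. The paper fills it without any case analysis. For $p\in W_i$ the inner-cap hypothesis already hands you a line $L$ through $p$ with $(W_i\setminus\{p\})\cup B_i$ entirely on one side, and the single extra observation is that, because $L$ misses the segment $B_i=\overline{x_{i-1}x_{i+2}}$, every other even pocket $T_j$ with $j\ne i$ also lies on the $B_i$ side of $L$. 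Hence $L$ separates $p$ from all of $W\setminus\{p\}$, and $W$ is in convex position at once---no triples to check.

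Two of your side remarks should be adjusted in light of this. First, the antichain hypothesis on $W_i$ plays no role in this observation; the inner-cap property with respect to $B_i$ alone forces the cap structure, so the comment that the antichain condition ``stops a block from folding back on itself'' is not what is doing the work here. Second, your explanation of why one skips the odd pockets (``slopes at the joints fitting into a single decreasing sequence'') is suggestive but not the actual mechanism: what matters is that $B_i$ reaches out to $x_{i-1}$ and $x_{i+2}$, so a line through $p\in T_i$ missing $B_i$ must already have $T_{i\pm 2},T_{i\pm 4},\dots$ on the far side, whereas the adjacent odd pockets $T_{i\pm 1}$ are not shielded by $B_i$ and could meet $L$.
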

\begin{proof}
Recall that $k = 2\lceil\sqrt{n\log n}\rceil$ is even.  Let us consider the sets $P_2, P_4, \ldots, P_{k }$. Suppose we have subsets $S_2 \subset P_2, S_4\subset P_4, \ldots, S_{k }\subset P_{k}$ such that $S_i$ is an antichain with respect to $\prec_i$,  an inner-cap with respect to $B_i$ and satisfies $|S_i|= w_i$.  Then we claim that $S = S_{2}\cup S_4 \cup \cdots \cup S_{k}$ is a cap and, therefore, in convex position.  Let $p \in S_{i}$.  Then there is a line $L$ through $p$ that has the property that all of the other points in $S_i$ lie below $L$ and $L$ does not intersect $B_i$.  Since $L$ does not intersect $B_{i}$, all of the points in $S\setminus \{p\}$ must lie below $L$. But then, we must have that
$$w_2 + w_4 + \cdots + w_{k} = |S| < n,$$
as required. 
A similar argument works for the parts $P_3, P_5, \ldots, P_{k+1}$ to prove the second inequality.  
\end{proof}

By Observation~\ref{obs1}, we have

$$w_2 + w_3 + \cdots + w_{k + 1} < 2n.$$

Let $P'_i \subset P_i$ be a chain with respect to $\prec_i$.  Clearly $P'_i$ avoids $x_{i}$ and $x_{i + 1}$.  Moreover, if $P'_i$ contains an outer-cup with respect to $x_i$, then it must be an inner-cap with respect to $x_{i + 1}$.  Therefore, if $|P'_i| > f_{\ell}(m,n)$, then, by Theorem~\ref{cupscapsC} applied to the convex set $K=\{x_{i}\}$, the set $P'_i$ contains either an outer-cup with respect to $x_i$ of size $m$, which is an inner-cap with respect to $x_{i+1}$ of size $m$, or an inner-cap with respect to $x_{i}$ of size $n$.  See Figure \ref{figoutercup}.

\begin{figure}
\begin{center}
\includegraphics[width=150pt]{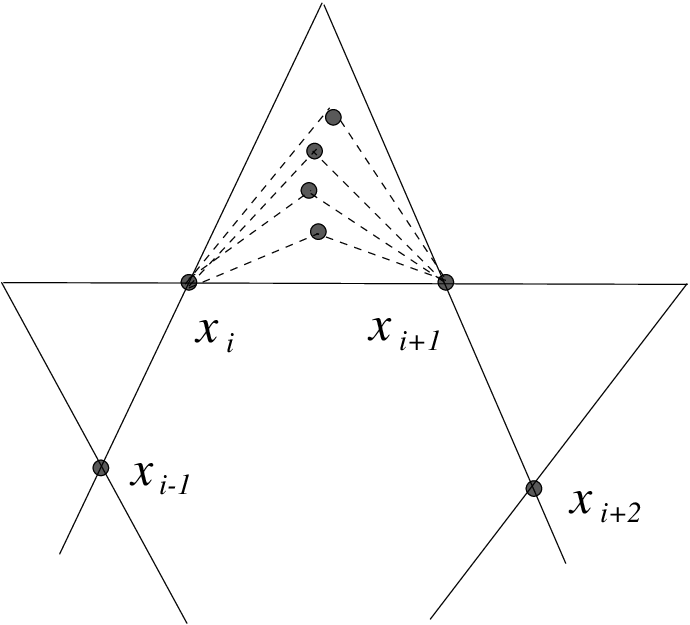}
  \caption{Four points in $P'_i$ that form an outer-cup with respect to $x_i$, which is an inner-cap with respect to $x_{i + 1}$.}\label{figoutercup}
 \end{center}
\end{figure}

\begin{obs} \label{obsadj}
    If there are subsets $Y_{i-1} \subset P_{i-1}$ and $Y_{i} \subset P_{i}$ such that $Y_{i-1}$ is a chain with respect to $\prec_{i-1}$ and an inner-cap with respect to $x_{i}$ and $Y_{i}$ is a chain with respect to $\prec_{i}$ and an inner-cap with respect to $x_{i}$, then $Y_{i-1}\cup Y_{i}$ is in convex position.

\end{obs}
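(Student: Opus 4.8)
The plan is to first pin down the local geometry at the vertex $x_i$ and then rule out any point of $Y_{i-1}\cup Y_i$ lying in the convex hull of three others. Since $X$ is a cap, the lines $\ell:=\overline{x_{i-1}x_i}$ and $\ell':=\overline{x_ix_{i+1}}$ meet at $x_i$, and the definition of the support regions forces $Y_{i-1}\subset\mathrm{int}\,T_{i-1}$ to lie in the open wedge at $x_i$ that is above $\ell$ and below $\ell'$, while $Y_i\subset\mathrm{int}\,T_i$ lies in the opposite open wedge, below $\ell$ and above $\ell'$; moreover every point of $Y_{i-1}$ has $x$-coordinate smaller than that of $x_i$, which is smaller than that of every point of $Y_i$. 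I would also note that an inner-cap with respect to a point is automatically in convex position (each of its points is separable from the rest, hence lies outside their hull), so $Y_{i-1}$ and $Y_i$ are each in convex position.

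Suppose then that $Y_{i-1}\cup Y_i$ is not in convex position, so some point $z$ lies in $\conv(\{a,b,c\})$ for three others $a,b,c$. Since each part is in convex position, not all of $z,a,b,c$ lie in one part; reflecting the configuration across a vertical line interchanges the roles of the two parts (and of $\prec_{i-1}$ and $\prec_i$), so I may assume $z\in Y_{i-1}$, hence that some of $a,b,c$, say $c$, lies in $Y_i$. The inner-cap-with-respect-to-$x_i$ property of $Y_{i-1}$ provides a line $L$ separating $z$ from $(Y_{i-1}\setminus\{z\})\cup\{x_i\}$, which I would take to be a supporting line of $K:=\conv\big((Y_{i-1}\setminus\{z\})\cup\{x_i\}\big)$; since $z\in\conv(\{a,b,c\})$ while each of $a,b,c$ lying in $Y_{i-1}$ is on the far side of $L$ from $z$, each of $a,b,c$ lying in $Y_i$ (in particular $c$) must lie strictly on the same side of $L$ as $z$. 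The goal is to contradict this by showing $L$ may be chosen with all of $Y_i$ on the $K$-side: any segment from $z$ to a point of $Y_i$, joining opposite wedges at $x_i$, must leave the wedge of $Y_{i-1}$ and cross $\ell$ or $\ell'$, and the chain hypothesis on $Y_{i-1}$ — the nested triangles $\triangle x_{i-2}x_{i+1}y$, $y\in Y_{i-1}$, over a base segment passing beneath $x_i$ — locates $x_i$ on $\partial K$ well enough that a supporting line of $K$ at $x_i$ which separates off $z$ keeps the whole opposite wedge, hence all of $Y_i$, on the $K$-side; and when no separating line through $x_i$ exists, $z$ lies so far from the wedge of $Y_i$ that $Y_i$ is excluded automatically. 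The case $z\in Y_i$ is symmetric (now using the chain hypothesis on $Y_i$), and the remaining cross-group distributions follow by the same mechanism together with the convex position of $Y_i$.

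I expect the last step to be the main obstacle: showing that the separating line supplied by the inner-cap-with-respect-to-$x_i$ hypothesis can always be taken to exclude all of the opposite part $Y_i$ — equivalently, ruling out a point of $Y_i$ "wrapping around" $x_i$ so as to trap $z$ inside a triangle. This is where the wedge geometry at $x_i$ must be married carefully to the chain hypotheses on $Y_{i-1}$ and $Y_i$; the rest of the argument is routine.
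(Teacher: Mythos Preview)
Your proposal has a genuine gap precisely where you flag it: you never establish that a separating line $L$ for $z$ against $(Y_{i-1}\setminus\{z\})\cup\{x_i\}$ can be chosen with all of $Y_i$ on the far side. The sketch you offer --- taking $L$ to support $K$ at $x_i$ and invoking the nested triangles from the chain hypothesis --- does not work as stated: nothing guarantees that $x_i$ is the relevant extreme point of $K$, nor that a supporting line there separates off $z$, and the fallback ``when no separating line through $x_i$ exists, $z$ lies so far from the wedge of $Y_i$ that $Y_i$ is excluded automatically'' is an assertion, not an argument. There is also a small slip earlier: from $z\in\conv(\{a,b,c\})$ and the $Y_{i-1}$-members of $\{a,b,c\}$ lying on the far side of $L$, you can only conclude that \emph{some} member of $\{a,b,c\}$ in $Y_i$ lies on the $z$-side, not all of them.

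The paper's route is both different and far shorter, and the key idea is one you never use: the \emph{chain} hypothesis controls the direction of every secant. If $p\prec_{i-1} q$, then $p\in\conv(B_{i-1}\cup\{q\})$, so the line $pq$ meets the segment $B_{i-1}=\overline{x_{i-2}x_{i+1}}$. From this single fact the paper finishes by checking four-point subsets: if $p_1,p_2\in Y_{i-1}$ and $p_3,p_4\in Y_i$, then the line $p_1p_2$ hits $B_{i-1}$ and hence misses $T_i$ (and symmetrically $p_3p_4$ misses $T_{i-1}$), so neither pair separates the other and the four points are convex; if $p_1,p_2,p_3\in Y_{i-1}$ and $p_4\in Y_i$, the three secant lines all cross $B_{i-1}$, which forces $p_4$ and $x_i$ to lie in the same cell of their arrangement, and the inner-cap-with-respect-to-$x_i$ property then gives convex position. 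Your wedge picture at $x_i$ is correct but ultimately unnecessary --- the chain condition already delivers the separation you are trying to manufacture with the inner-cap line.
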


\begin{proof}
It suffices to show that every four points in $Y_{i-1}\cup Y_{i}$ are in convex position.  If all four points lie in $Y_i$, then they are in convex position.  Likewise, if they all lie in $Y_{i -1}$, they are again in convex position.  Suppose we take two points $p_1,p_2 \in Y_{i-1}$ and two points $p_3,p_4 \in Y_{i}$.  Since $Y_{i-1}$ and $Y_{i}$ are both chains with respect to $\prec_{i-1}$ and $\prec_{i}$ respectively, the line spanned by $p_1,p_2$ does not intersect the region $T_{i}$ and the line spanned by $p_3,p_4$ does not intersect the region $T_{i-1}$.  Hence, $p_1,p_2,p_3,p_4$ are in convex position.  Now suppose we have $p_1,p_2,p_3 \in Y_{i-1}$ and $p_4 \in Y_{i}$.  Since the three lines $L_1,L_2,L_3$ spanned by $p_1,p_2,p_3$ all intersect the segment $B_{i-1}$, both $x_{i}$ and $p_4$ lie in the same region in the arrangement of $L_1\cup L_2\cup L_3$.  Therefore, $p_1,p_2,p_3,p_4$ are in convex position. The same argument works for the case where $p_1 \in Y_{i-1}$ and $p_2,p_3,p_4 \in Y_{i}$.  See Figure~\ref{gluepic}.
\end{proof}

\begin{figure}\label{gluepic}
\begin{center}
\includegraphics[width=200pt]{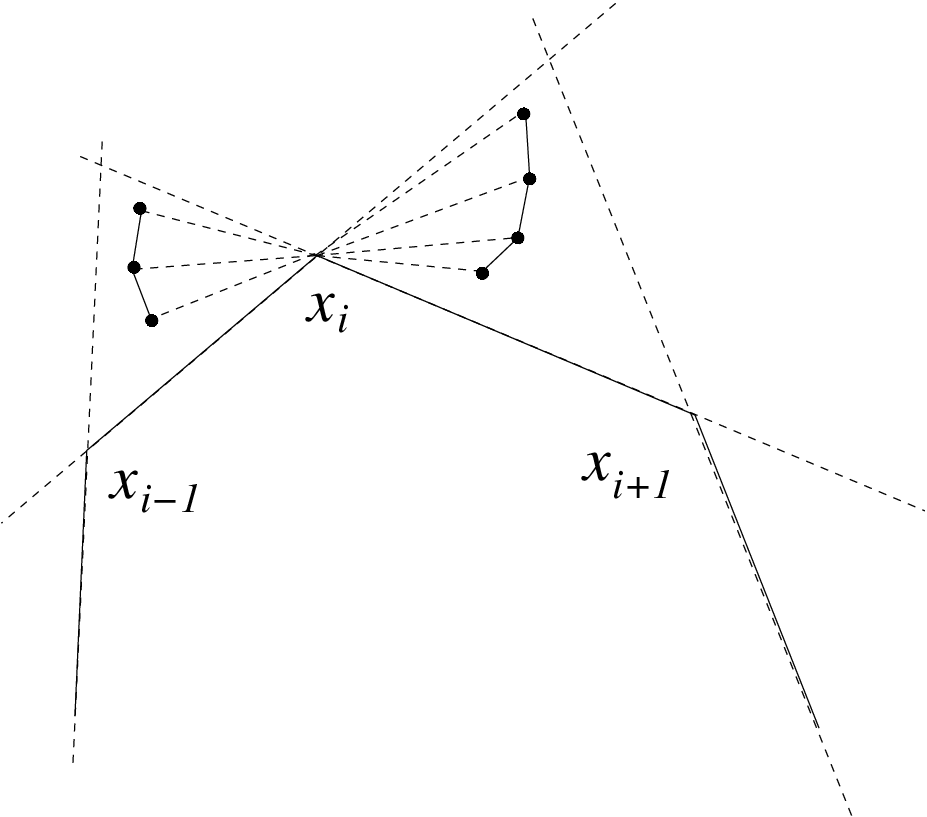}
  \caption{An inner-cap of size 3 with respect to $x_i$ in $P'_{i-1}$ and an inner-cap of size 4 in $P'_{i}$ with respect to $x_i$, which gives 7 points in convex position.}
 \end{center}
\end{figure}

By Observation~\ref{obsadj}, we have $a_i + b_{i + 1} < n$ for all $i$. By applying Theorem~\ref{cupscapsC} with $K = \{x_i\}$, we have
$v_i\le f_{\ell}(a_i+1, b_i+1)< c(\ell+n){a_i+b_i-2 \choose a_i-1}$.  Likewise, by applying Theorem \ref{cupscapsC} with $K = B_i$, we have $h_i\le f_{\ell}(w_i+1, z_i+1)< c(\ell+n){w_i+z_i-2 \choose w_i-1}$.
 Putting everything together, we obtain

\begin{eqnarray*}
       \frac{N^k}{2^{40k^2}}  & \leq & \prod\limits_{i = 2}^{k+1}|P_i| \\\\
     & \leq  & \prod\limits_{i = 2}^{k+1}v_ih_i \\\\
     & \leq &\prod\limits_{i = 2}^{k+1} c^2(\ell + n)^2 \binom{a_i + b_i - 2}{a_i - 1}\binom{w_i + z_i - 2}{w_i - 1}\\\\
     & < & \prod\limits_{i = 2}^{k+1}c^2(\ell + n)^2 2^{a_i + b_i}(2n)^{w_i}\\\\
     & < & (c^2(\ell + n))^{2k} 2^{(k + 1)n + 2n \log (2n)},
\end{eqnarray*}

\noindent where $c$ is the absolute constant from Theorem \ref{cupscapsC}.  Therefore, we have
$$N < c^2(\ell + n)^2 2^{n + 3(n/k)\log (2n) + 40k}.$$

\noindent Since $k = 2\lceil\sqrt{n\log n}\rceil$, this gives us
$$N < \ell^2\cdot  2^{n + O(\sqrt{n\log n})}.$$
Since $|P| = N =  \ell^2 \cdot 2^{n+ C\sqrt{n \log n}} $, by setting $C$ sufficiently large, we have a contradiction.\end{proof}

{\bf Acknowledgements.} This research was initiated during a visit to the American Institute of Mathematics under their SQuaREs program.

\end{document}